    \DeclareSymbolFont{cyrletters}{OT2}{wncyr}{m}{n}
    \DeclareMathSymbol{\Sha}{\mathalpha}{cyrletters}{"58}
     \newtheorem{thm}{Theorem}[section]
     \newtheorem{lem}[thm]{Lemma}
     \newtheorem{dfn}[thm]{Definition}
     \theoremstyle{definition}
     \theoremstyle{remark}
     \theoremstyle{question}
     \numberwithin{equation}{section}
    \newcommand{\sm}{\left(\begin{smallmatrix}}
    \newcommand{\esm}{\end{smallmatrix}\right)}
    \newcommand{\mat}{\left(\begin{matrix}}
    \newcommand{\emat}{\end{matrix}\right)}
    \newcommand{\mbf}{\mathbf}
    \def\CC{\mathbb{C}}
    \def\HH{\mathbb{H}}
    \def\GL{\mathrm{GL}}
\begin{document}

 \title{$L$-series for Vector-Valued Weakly Holomorphic Modular Forms and Converse Theorems}

   \author{Subong Lim}
   \address{Department of Mathematics Education, Sungkyunkwan University, Jongno-gu, Seoul 110-745, Republic of Korea}
   \email{subong@skku.edu}
   \author{Wissam Raji}
   \address{Department of Mathematics, American University of Beirut (AUB) and the Number Theory Research Unit at the Center for Advanced Mathematical Sciences (CAMS) at AUB, Beirut, Lebanon}
   \email{wr07@aub.edu.lb}

\begin{abstract}
We introduce the $L$-series of weakly holomorphic modular forms using Laplace transforms and give their functional equations.  We then determine converse theorems for vector-valued harmonic weak Maass forms, Jacobi forms, and elliptic modular forms of half-integer weight in Kohnen plus space.
\end{abstract} 
\maketitle

\section{Introduction}
The theory of $L$-functions exhibits natural connections with various mathematical subjects including number fields, automorphic forms, Artin representations, Shimura varieties, abelian varieties, and intersection theory. $L$-series of holomorphic modular forms and Maass forms has been studied extensively and many interesting results have emerged.  However, when it comes to harmonic Maass forms, the study of the forms themselves seems to have attracted more attention than the study of their $L$-series.  There have been several definitions of $L$-series associated with weakly holomorphic modular forms in the literature but almost all have fallen short of recovering a converse theorem except for the definition given in \cite{DLRR}.  Recently, in \cite{DLRR}, using Laplace transforms, $L$-series of harmonic weak Maass forms were introduced. The formulation presented made it possible to present a converse theorem for weakly holomorphic modular forms.\\
\par On the other hand, vector-valued modular forms are important generalizations of elliptic modular forms that arise naturally in the theory of Jacobi forms, Siegel modular forms, and Moonshine.  Vector-valued modular forms have been used as an important tool in tackling classical problems in the theory of modular forms.  For example, Selberg used these forms to give estimates for the Fourier coefficients of the classical modular forms \cite{S}. Borcherds in \cite{B1} and \cite{B2} used vector-valued modular forms associated with Weil representations to describe the Fourier expansion of various theta liftings. Some applications of vector-valued modular forms stand out in high-energy physics by mainly providing a method of differential equations to construct the modular multiplets, and also revealing the simple structure of the modular invariant mass models \cite{DL}. Other applications concerning vector-valued modular forms of half-integer weight seem to provide a simple solution to the Riemann-Hilbert problem for representations of the modular group \cite{BG}. So it is only natural to study the $L$-series of vector-valued modular forms and their properties as a buildup that aligns with the development of a Hecke theory to the space of vector-valued modular forms.\\

\par In this paper, we define $L$-series for vector-valued harmonic weak Maass forms and in particular vector-valued weakly holomorphic modular forms using the Laplace transform where our definition enables us to give a converse theorem.  The definition of our $L$-series is analogous to that in \cite{DLRR}.  The main point of defining our $L$-series is meant to recover a converse theorem for vector-valued weakly holomorphic modular forms.  Moreover, we determine similar definitions for Jacobi forms and modular forms of half-integral weight in Kohnen plus space and determine converse theorems in those cases as well. Converse theorems have historically offered a means to describe Dirichlet series linked to modular forms by examining their analytic properties.  Hecke initially demonstrated that Dirichlet series connected to modular forms exhibit certain analytic properties, and then went on to show conversely that these analytic properties define modular Dirichlet series \cite{E}.  A common application of a Converse Theorem that might initially come to mind is in addressing the modularity of certain arithmetic or geometric objects associated with a given $L$-series. Instead, the most natural way of applying the Converse theorem is to Functoriality. For example, the transfer of automorphic representations from some group G to $GL_n$. In another application of some of these cases of Functoriality, Kim and Shahidi have established the best-known general bounds towards the Ramanujan conjecture for $GL_2$ \cite{KS}.  As a result, the main focus of our paper is to recover converse theorems for different modular objects in the vector-valued case through their $L$-series that are defined using Laplace transforms.

\section{$L$-series of vector-valued weakly holomorphic modular forms} \label{Lweakly}
This section introduces the $L$-series of a vector-valued weakly holomorphic modular form and proves some of its properties. In particular, we prove a converse theorem.  We start by introducing some notation.\\

\par
  Let $\Gamma = \mathrm{SL}_2(\mathbb{Z})$. 
 Let $k\in \frac12 \mathbb{Z}$ and $\chi$ a unitary multiplier system of weight $k$ on $\Gamma$, i.e. $\chi:\Gamma\to\mathbb{C}$ satisfies the following conditions:
  \begin{enumerate}
  \item $|\chi(\gamma)| = 1$ for all $\gamma\in \Gamma$.
  \item $\chi$ satisfies the consistency condition
  \[
  \chi(\gamma_3) (c_3\tau + d_3)^k = \chi(\gamma_1)\chi(\gamma_2) (c_1\gamma_2\tau + d_1)^k (c_2\tau+d_2)^k,
  \]
  where $\gamma_3 = \gamma_1\gamma_2$ and $\gamma_i = \sm a_i&b_i\\c_i&d_i\esm\in \Gamma$ for $i=1,2$, and $3$.
  \end{enumerate}
  Let $m$ be a positive integer and $\rho:\Gamma\to \mathrm{GL}(m, \mathbb{C})$ a $m$-dimensional unitary complex representation. 
  Let $\{\mbf{e}_1, \ldots, \mbf{e}_m\}$ denote the standard basis of $\mathbb{C}^m$. 
  For a vector-valued function $f = \sum_{j=1}^m f_j \mbf{e}_j$ on $\mathbb{H}$ and $\gamma\in \Gamma$, define a slash operator by
  \[
  (f|_{k,\chi,\rho}\gamma)(\tau):= (c\tau+d)^{-k} \chi^{-1}(\gamma) \rho^{-1}(\gamma) f(\gamma \tau).
  \]
  The definition of the vector-valued modular forms is given as follows.
  
  \begin{dfn}
  A vector-valued weakly holomorphic  modular form of weight $k$, multiplier system $\chi$, and type $\rho$ on $\Gamma$ is a sum $f = \sum_{j=1}^m f_j \mbf{e}_j$ of functions holomorphic in  $\mathbb{H}$ satisfying the following conditions:
  \begin{enumerate}
  \item $f|_{k,\chi,\rho}\gamma = f$ for all $\gamma\in \Gamma$.
  \item For each $1\leq j\leq m$, each function $f_j$ has a Fourier expansion of the form
  \begin{equation} \label{vvmfFourier}
  f_i(\tau) = \sum_{n+\kappa_j\gg 0} a_{f,j}(n)e^{2\pi i(n+\kappa_j)\tau}.
  \end{equation}
  Here and throughout the paper, $\kappa_j$ is a certain positive number with $0\leq \kappa_j<1$. 
  \end{enumerate}
  \end{dfn}
  
  The space of all vector-valued weakly holomorphic modular forms of weight $k$, multiplier system $\chi$, and type $\rho$ on $\Gamma$ is denoted by $M^!_{k,\chi,\rho}$. 
  There is a subspace $S_{k,\chi,\rho}$ of vector-valued cusp forms for which we require each $a_j(n) = 0$ when $n+\kappa_j$ is non-positive.
  
  For a vector-valued cusp form $f(\tau) = \sum_{j=1}^m \sum_{n+\kappa_j>0} a_j(n)e^{2\pi i(n+\kappa_j)\tau}\mbf{e}_j\in S_{k,\chi,\rho}$, then $a_j(n) = O(n^{k/2})$ for every $1\leq j\leq m$ as $n\to\infty$ by the same argument for classical modular forms (for more details, see \cite{KM}). 
  Then, the vector-valued $L$-series defined by
  \begin{equation} \label{usual}
  L(f,s) := \sum_{j=1}^m \sum_{n+\kappa_j>0} \frac{a_j(n)}{(n+\kappa_j)^s}\mbf{e}_j
  \end{equation}
  converges absolutely for $\mathrm{Re}(s)\gg0$. 
   This has an integral representation
  \[
  \frac{\Gamma(s)}{(2\pi)^s} L(f,s) = \int_0^\infty f(iv) v^s \frac{dv}{v}.
  \]  
  From this, we see that it has an analytic continuation to $\mathbb{C}$ and a functional equation 
 \[
 L^*(f,s) = i^k \chi(S) \rho(S) L^*(f,k-s),
 \]
 where $L^*(f,s) = \frac{\Gamma(s)}{(2\pi)^s} L(f,s)$ and $S = \sm 0&-1\\1&0\esm$ (for example, see \cite{JL, LR, LR2}).

Let $f$ be a vector-valued weakly holomorphic modular form in $M^!_{k,\chi,\rho}$ with Fourier expansion in (\ref{vvmfFourier}).
Let $n_0\in\mathbb{N}$ be such that $f_j(\tau)$ are $O(e^{2\pi n_0 v})$ as $v=\mathrm{Im}(\tau)\to\infty$ for each $1\leq j\leq m$.
Let $\mathcal{F}_f$ be the space of test functions $\varphi:\mathbb{R}_+ \to \mathbb{C}$ such that
\begin{enumerate}
\item
 $(\mathcal{L}\varphi)(s)$ converges for all $s$ with $\mathrm{Re}(s) \geq -2\pi n_0$, 

\item the series
\[
\sum_{n\gg-\infty} | a_{f,j}(n)| (\mathcal{L}|\varphi_j|)(2\pi (n+\kappa))
\]
converges for each $1\leq j\leq m$.
\end{enumerate}
The space $\mathcal{F}_f$ contains the compactly supported smooth functions on $\mathbb{R}_+$ because of the growth of $a_{f,j}(n)$ (for more details of the growth of $a_{f,j}(n)$, see \cite{BF}). 
We define the vector-valued $L$-series map $L_f : \mathcal{F}_f \to \mathbb{C}^m$ by
\[
L_f(\varphi) := \sum_{j=1}^m \sum_{n\gg-\infty} a_{f,j}(n) (\mathcal{L} \varphi)(2\pi (n+\kappa_j))\mbf{e}_j.
\]
Then, we prove that this $L$-series has an integral representation and satisfies a functional equation.

\begin{thm} \label{property}
Let $f$ be a vector-valued weakly holomorphic modular form in $M^!_{k,\chi,\rho}$.
\begin{enumerate}
\item For $s\in \mathbb{C}$, let $
I_s(x) :=  (2\pi)^s x^{s-1} \frac{1}{\Gamma(s)}$. 
If $f$ is a vector-valued cusp form, then $L_f(I_s) = L(f,s)$.

\item For $\varphi\in \mathcal{F}_f$, the $L$-series $L_f(\varphi)$ can be given by
\[
L_f(\varphi) = \int_0^\infty f(iy) \varphi(y) dy.
\]

\item We have the following functional equation:
\[
L_f(\varphi) = i^k \rho(S) L_f(\varphi|_{2-k,\chi^{-1}} S),
\]
where $(\varphi|_{2-k,\chi^{-1}} S)(x) := x^{k-2}\chi(S) \varphi\left( \frac1 x\right)$ for all $x>0$.

\end{enumerate}
\end{thm}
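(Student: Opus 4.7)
The plan is to prove the three parts in order, since each builds on the previous. Part (1) reduces to a direct computation of a Laplace transform, part (2) is a Fubini-type argument applied to the Fourier expansion, and part (3) combines the integral representation from (2) with the modular transformation of $f$ under $S$, via the substitution $y \mapsto 1/y$.

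For part (1), I would compute the Laplace transform of $I_s$ directly. Since $(\mathcal{L} I_s)(t) = \frac{(2\pi)^s}{\Gamma(s)} \int_0^\infty x^{s-1} e^{-tx}\,dx = (2\pi/t)^s$ by the standard gamma integral (for $\mathrm{Re}(s) > 0$ and $t > 0$), evaluating at $t = 2\pi(n+\kappa_j)$ yields $(n+\kappa_j)^{-s}$. Substituting this into the definition of $L_f(I_s)$ reproduces the series (\ref{usual}), which converges since $f$ is cuspidal. For part (2), I would substitute the Fourier expansion $f_j(iy) = \sum_n a_{f,j}(n) e^{-2\pi(n+\kappa_j)y}$ into the integral and swap sum and integral to recover $\sum_{j,n} a_{f,j}(n)(\mathcal{L}\varphi)(2\pi(n+\kappa_j))\mbf{e}_j = L_f(\varphi)$. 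The interchange is legitimate precisely because of the second defining condition of $\mathcal{F}_f$, which is exactly the absolute convergence hypothesis that Fubini requires.

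For part (3), I would start from the representation in (2), apply the substitution $y \mapsto 1/y$ (which gives $dy \mapsto du/u^2$ and swaps the endpoints harmlessly), and then use the modular invariance $f|_{k,\chi,\rho} S = f$ in the form $f(i/u) = i^k u^k \chi(S)\rho(S) f(iu)$ (obtained by setting $\tau = iu$ in $f(-1/\tau) = \tau^k \chi(S)\rho(S) f(\tau)$, with the convention $(iu)^k = i^k u^k$). The resulting integral reads $i^k \chi(S)\rho(S) \int_0^\infty f(iu)\, u^{k-2}\varphi(1/u)\,du$, and by the very definition of the slash operator on $\varphi$, the factor $u^{k-2}\chi(S)\varphi(1/u)$ equals $(\varphi|_{2-k,\chi^{-1}} S)(u)$. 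Applying (2) in reverse then identifies this with $i^k \rho(S) L_f(\varphi|_{2-k,\chi^{-1}} S)$, as desired.

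The main obstacle is the absolute-convergence bookkeeping in part (2): one must verify that, even in the weakly holomorphic (non-cuspidal) case where $f(iy)$ grows exponentially as $y \to \infty$ and the Fourier series is doubly infinite, the termwise integration is justified. This is exactly where the two conditions packaged into $\mathcal{F}_f$ do their work, and why $n_0$ is chosen so that $f_j(iy) = O(e^{2\pi n_0 y})$. A secondary point in part (3) is that the identity presupposes $\varphi|_{2-k,\chi^{-1}} S \in \mathcal{F}_f$ so that both sides are defined; this can be checked directly from the two defining conditions once the substitution is performed, and no further analytic input is needed beyond what makes part (2) itself work.
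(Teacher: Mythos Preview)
Your proposal is correct and follows essentially the same approach as the paper: the gamma integral computation for (1), the Fubini/interchange argument justified by the defining conditions of $\mathcal{F}_f$ for (2), and the substitution $y\mapsto 1/y$ combined with the $S$-transformation law for (3). The only cosmetic difference is that the paper writes the modularity as $f(iy)=(iy)^{-k}\chi^{-1}(S)\rho(S)^{-1}f(i/y)$ before substituting, whereas you start from the equivalent form $f(i/u)=i^k u^k\chi(S)\rho(S)f(iu)$.
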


\begin{proof}
(1) For $u>0$ and $\mathrm{Re}(s)>0$, we have
\[
(\mathcal{L} I_s)(u) = \frac{(2\pi)^s}{\Gamma(s)} \int_0^\infty e^{-ut} t^{s-1}dt = \left(\frac{2\pi}{u}\right)^s.
\]
Therefore, if $f$ is a vector-valued cusp form, then  $L_f(I_s)$ is the usual $L$-series of $f$ defined in (\ref{usual}).

(2) Suppose that $f$ has a Fourier expansion as in (\ref{vvmfFourier}).
By the definition of $L_f(\varphi)$, we have
\begin{eqnarray*}
L_f(\varphi) &=& \sum_{j=1}^m \sum_{n\gg-\infty} a_{f,j}(n) (\mathcal{L} \varphi_j)(2\pi (n+\kappa_j))\mbf{e}_j\\
&=& \sum_{j=1}^m \sum_{n\gg-\infty} a_{f,j}(n) \int_0^\infty e^{-2\pi (n+\kappa_j)t}\varphi_j(t)dt \mbf{e}_j\\
&=& \sum_{j=1}^m  \left(\int_0^\infty  f_j(it) \varphi_j(t)  dt\right) \mbf{e}_j.
\end{eqnarray*}
The last equality follows from the fact that we can interchange the order of summation and integration since $\varphi\in \mathcal{F}_f$.

(3) Since $f\in M^!_{k,\chi,\rho}$, we see that 
\[
f(iy) = (iy)^{-k} \chi^{-1}(S) \rho(S)^{-1} f\left(i\frac1y\right)
\]
for $y>0$. 
Therefore, we have
\begin{eqnarray*}
L_f(\varphi) &=& \int_0^\infty f(iy)\varphi(y)dy\\
&=& \int_0^\infty f\left( i\frac 1y \right) \varphi \left( \frac 1y \right) y^{-2}dy\\
&=& \int_0^\infty  (iy)^k \chi(S) \rho(S) f(iy) \varphi \left( \frac 1y \right) y^{-2}dy\\
&=& i^k \rho(S) \int_0^\infty f(iy) y^{k-2} \chi(S) \varphi \left( \frac 1y\right) dy.
\end{eqnarray*}
\end{proof}

Let $\mathcal{C}(\mathbb{R}, \mathbb{C})$ be the space of piece-wise smooth complex-valued functions on $\mathbb{R}$.
For $s\in\mathbb{C}$ and $\varphi\in \mathcal{C}(\mathbb{R}, \mathbb{C})$, we define $\varphi_s(x) := \varphi(x) x^{s-1}$.
We also define the series 
\[
L(s,f,\varphi) := L_f(\varphi_s).
\]
Then, we prove that this series has an analytic continuation to all $s\in\mathbb{C}$ and satisfies a functional equation.

\begin{thm} \label{varphi_s}
Let $f\in M^!_{k, \chi,\rho}$, and  $n_0\in\mathbb{N}$ be such that $f_j(\tau)$ are $O(e^{2\pi n_0 v})$ as $v=\mathrm{Im}(\tau)\to\infty$ for each $1\leq j\leq m$. 
Suppose that $\varphi \in \mathcal{C}(\mathbb{R}, \mathbb{C})$ is a non-zero function such that, for some $\epsilon>0$, $\varphi(x)$ and $\varphi(x^{-1})$ are $o(e^{-2\pi(n_0+\epsilon)x})$ as $x\to\infty$. 
 Then the series $L(s,f,\varphi)$ 
converges absolutely for $\mathrm{Re} > \frac12$, has an analytic continuation to all $s\in\mathbb{C}$ and satisfies the functional equation
\[
L(s,f,\varphi) = i^k \rho(S) L(1-s, f, \varphi|_{1-k,\chi^{-1}} S).
\]
\end{thm}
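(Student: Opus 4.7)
The plan is to reduce the statement to Theorem \ref{property} by working with the test function $\varphi_s(x) = \varphi(x) x^{s-1}$. First I would verify that $\varphi_s$ lies in $\mathcal{F}_f$ for $s$ in the claimed half-plane, so that $L(s,f,\varphi) = L_f(\varphi_s)$ is well-defined and Theorem \ref{property}(2) supplies the integral representation
\[
L(s,f,\varphi) = \int_0^\infty f(iy) \varphi(y) y^{s-1} dy.
\]
The delicate part is absolute convergence of the series in the definition of $\mathcal{F}_f$, because the Fourier coefficients of a weakly holomorphic form grow subexponentially in $n$. The hypothesis $\varphi(x^{-1}) = o(e^{-2\pi(n_0+\epsilon)x})$ forces $\varphi(t)$ to vanish rapidly as $t\to 0^+$, and a saddle-point style estimate then shows $(\mathcal{L}\varphi_s)(u)$ decays like $e^{-c\sqrt{u}}$ as $u\to\infty$, which dominates the growth of $|a_{f,j}(n)|$.

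With the integral representation in hand, I would obtain the analytic continuation by Riemann's classical splitting at $y=1$:
\[
L(s,f,\varphi) = \int_0^1 f(iy) \varphi(y) y^{s-1} dy + \int_1^\infty f(iy) \varphi(y) y^{s-1} dy.
\]
The tail integral is entire in $s$ because $|f(iy)\varphi(y)| = o(e^{-2\pi\epsilon y})$ uniformly on $[1,\infty)$. For the piece on $(0,1)$, I would change variables $y \mapsto 1/y$ and invoke the transformation $f(i/y) = (iy)^k \chi(S)\rho(S) f(iy)$ coming from $f|_{k,\chi,\rho} S = f$, rewriting it as
\[
i^k \chi(S) \rho(S) \int_1^\infty f(iy) \varphi(1/y) y^{k-s-1} dy,
\]
which is entire in $s$ by the hypothesis on $\varphi(x^{-1})$. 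This gives the analytic continuation to all of $\mathbb{C}$.

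For the functional equation, the cleanest route is to notice the identity
\[
\varphi_s|_{2-k, \chi^{-1}} S = (\varphi|_{1-k, \chi^{-1}} S)_{1-s},
\]
which is verified by direct computation since both sides equal $\chi(S)\, x^{k-s-1} \varphi(1/x)$. Applying Theorem \ref{property}(3) to the test function $\varphi_s$ then yields
\[
L(s,f,\varphi) = L_f(\varphi_s) = i^k \rho(S) L_f(\varphi_s|_{2-k,\chi^{-1}} S) = i^k \rho(S) L(1-s, f, \varphi|_{1-k,\chi^{-1}} S),
\]
as claimed. The main obstacle lies in the first step, namely the membership $\varphi_s \in \mathcal{F}_f$: once absolute convergence of the defining series is established through the Laplace transform estimate sketched above, the analytic continuation and functional equation follow essentially formally from Theorem \ref{property} combined with the modular transformation of $f$ under $S$.
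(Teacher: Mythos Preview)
Your proposal is correct and follows essentially the same route as the paper: check $\varphi_s\in\mathcal{F}_f$, invoke the integral representation from Theorem~\ref{property}(2), split at $y=1$, and use the $S$-transformation of $f$. The only cosmetic differences are that the paper obtains $\varphi_s\in\mathcal{F}_f$ via the Cauchy--Schwarz bound $(\mathcal{L}|\varphi_s|)(y)\le(\mathcal{L}(|\varphi|^2)(y))^{1/2}\,y^{1/2-\mathrm{Re}(s)}\,\Gamma(2\mathrm{Re}(s)-1)^{1/2}$ (whence the threshold $\mathrm{Re}(s)>\tfrac12$) rather than your saddle-point estimate, and reads off the functional equation directly from the symmetry of the split integral rather than from Theorem~\ref{property}(3) combined with your identity $\varphi_s|_{2-k,\chi^{-1}}S=(\varphi|_{1-k,\chi^{-1}}S)_{1-s}$.
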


\begin{proof}
By the growth of $\varphi$, we see that $\mathcal{L}(|\varphi|^2)(y)$ converges absolutely for $y\geq -2\pi n_0$. 
For $y>0$ and $s\in\mathbb{C}$ with $\mathrm{Re}(s)>\frac12$, Cauchy-Schwarz inequality implies that
\[
(\mathcal{L}|\varphi_s|)(y) \leq (\mathcal{L}(|\varphi|^2)(y))^{\frac12} y^{-\mathrm{Re}(s)+\frac12} (\Gamma(2\mathrm{Re}(s)-1))^{\frac12}.
\]
Therefore, $\varphi_s\in \mathcal{F}_f$ for $\mathrm{Re}(s)>\frac12$. 

Recall that 
\[
f(iy) = (iy)^{-k} \chi^{-1}(S) \rho(S)^{-1} f\left(i\frac1y\right)
\]
for $y>0$. 
Therefore, we have
\begin{eqnarray*}
L(s, f, \varphi) &=& L_f(\varphi_s) = \int_0^\infty f(iy)\varphi_s(y)dy\\
&=& \int_0^\infty f(iy) \varphi(s) y^{s-1} dy\\
&=& \int_1^\infty f(iy) \varphi(y)y^{s-1} dy + \int_0^1 f(iy)\varphi(y)y^{s-1}dy\\
&=& \int_1^\infty f(iy) \varphi(y)y^{s-1} dy + \int_1^\infty f\left( i\frac1y \right) \varphi \left( \frac 1y \right) y^{1-s}y^{-2}dy\\
&=& \int_1^\infty f(iy) \varphi(y)y^{s-1} dy + i^k \rho(S) \int_1^\infty f(iy) \left(\varphi|_{1-k,\chi^{-1}}S\right)(y) y^{-s}dy.
\end{eqnarray*}
By the growth of $\varphi$ at $0$ and $\infty$, we see that the integrals 
\[
\int_1^\infty f(iy) \varphi(y)y^{s-1} dy\  \text{and}\ \int_1^\infty f(iy) \left(\varphi|_{1-k,\chi^{-1}}S\right)(y) y^{-s}dy
\]
 are well-defined for all $s\in\mathbb{C}$, and give holomorphic functions.

Since $f\in M^!_{k, \chi,\rho}$, we obtain that $\rho(-I_2) \chi(-I_2) = - I_m$, where $I_m$ denotes the identity matrix of size $m$. 
Therefore, we get the desired functional equation. 
\end{proof}

Let $S_c(\mathbb{R}_+)$ be a set of complex-valued, compactly supported, and piecewise smooth functions on $\mathbb{R}_+$, which satisfy the following condition: for any $y\in\mathbb{R}_+$, there exists $\varphi\in S_c(\mathbb{R}_+)$ such that $\varphi(y)\neq 0$. 
We write $\langle \cdot, \cdot\rangle$ for the standard scalar product on $\mathbb{C}^m$, i.e.
\[
\left \langle \sum_{j=1}^m \lambda_j \mbf{e}_j,  \sum_{j=1}^m \mu_j \mbf{e}_j \right \rangle = \sum_{j=1}^m \lambda_j \overline{\mu_j}.
\]
We now state the converse theorem in the case of vector-valued weakly holomorphic modular forms.

\begin{thm} \label{converse}
For each $1\leq j\leq m$, let $(a_{f,j}(n))_{n\geq-n_0}$ be a sequence of complex numbers such that $a(n) = O(e^{C\sqrt{n}})$ as $n\to\infty$, for some $C>0$. 
For each $\tau\in\mathbb{H}$, set
\[
f(\tau) := \sum_{j=1}^m \sum_{n=-n_0}^\infty a_{f,j}(n)e^{2\pi i(n+\kappa_j)\tau} \mbf{e}_j.
\]
Suppose that for each $\varphi\in S_c(\mathbb{R}_+)$, the function $L_f(\varphi)$ satisfies 
\[
L_f(\varphi) = i^k \rho(S) L_f(\varphi|_{2-k, \chi^{-1}} S).
\]
Then, $f$ is a vector-valued weakly holomorphic modular form in $M^!_{k,\chi,\rho}$.
\end{thm}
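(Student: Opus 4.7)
The plan is to deduce from the functional equation invariance of $f$ under the generators $S=\sm 0 & -1 \\ 1 & 0 \esm$ and $T=\sm 1 & 1 \\ 0 & 1 \esm$ of $\Gamma$. Invariance under $T$ should come essentially for free from the shape of the Fourier expansion: the parameters $\kappa_j$ are the standard exponents attached to the action of $\rho(T)\chi(T)$ on the basis $\mbf{e}_j$, so a direct comparison gives $f(\tau+1)=\chi(T)\rho(T)f(\tau)$, i.e.\ $f|_{k,\chi,\rho}T=f$. The substantive task is therefore to prove $f|_{k,\chi,\rho}S=f$.

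First I would verify that the series defining $f$ converges to a holomorphic function on $\mathbb{H}$: the bound $a_{f,j}(n)=O(e^{C\sqrt{n}})$ is dominated by the exponential factor $e^{-2\pi(n+\kappa_j)v}$ on any half-plane $v\ge\delta>0$, and only finitely many terms have non-positive frequency. Next, for $\varphi\in S_c(\mathbb{R}_+)$ supported in a compact interval $[\alpha,\beta]\subset(0,\infty)$, the Laplace transform $(\mathcal{L}\varphi)(2\pi(n+\kappa_j))$ decays like $e^{-2\pi(n+\kappa_j)\alpha}$, which overwhelms the subexponential growth of the coefficients and justifies Fubini. Just as in the proof of Theorem \ref{property}(2), this yields the integral representation
\[
L_f(\varphi)=\int_0^\infty f(iy)\varphi(y)\,dy.
\]

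Applying this representation to both $\varphi$ and $\varphi|_{2-k,\chi^{-1}}S$, the hypothesis together with the substitution $y\mapsto 1/y$ on the right-hand integral produces
\[
\int_0^\infty\Bigl[f(iy)-i^k\rho(S)\chi(S)y^{-k}f(i/y)\Bigr]\varphi(y)\,dy=0
\]
for every $\varphi\in S_c(\mathbb{R}_+)$. The bracketed expression is continuous on $\mathbb{R}_+$, and the defining property of $S_c(\mathbb{R}_+)$ (that for each $y_0>0$ there is a test function not vanishing at $y_0$) allows a standard bump-function localisation argument to conclude that the bracket vanishes identically. Using the consistency relations $\chi(S)^2=\chi(-I_2)$, $\rho(S)^2=\rho(-I_2)$ and $\chi(-I_2)\rho(-I_2)=(-1)^k I_m$ (the last of which is the condition for the existence of non-zero forms of weight $k$ and is already used in the proof of Theorem \ref{varphi_s}), this pointwise identity rewrites as $(f|_{k,\chi,\rho}S)(iy)=f(iy)$ for all $y>0$.

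Finally, to pass from the imaginary axis to all of $\mathbb{H}$: both $f$ and $f|_{k,\chi,\rho}S$ are holomorphic on $\mathbb{H}$, so agreement on $i\mathbb{R}_+$ forces agreement everywhere by the identity theorem. Combined with $T$-invariance, this gives $f|_{k,\chi,\rho}\gamma=f$ for every $\gamma\in\Gamma$; the holomorphy of $f$ together with the lower-bounded Fourier expansion then places it in $M^!_{k,\chi,\rho}$. The step I expect to demand the most care is the transition from the integral relation (valid against all $\varphi\in S_c(\mathbb{R}_+)$) to the pointwise identity on $\mathbb{R}_+$: in particular, controlling the exchange of summation and integration for frequencies with $n+\kappa_j\le 0$ and extracting pointwise information from the family $S_c(\mathbb{R}_+)$. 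The compact support of $\varphi$ bounded away from $0$ is decisive for both.
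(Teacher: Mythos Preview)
Your proposal is correct and reaches the same conclusion as the paper, but by a more direct route. The paper's proof introduces the Mellin transform $L_f(\varphi_s)$, applies Mellin inversion to recover $f(iy)\varphi(y)$, shifts the contour of integration (using that $L_f(\varphi_s)\to 0$ as $|\mathrm{Im}(s)|\to\infty$, established via integration by parts), applies the functional equation hypothesis to $\varphi_{k-s}$, and then Mellin-inverts once more to obtain the \emph{pointwise} identity $f(iy)\varphi(y)=i^k y^{-k}\rho(S)\chi(S)f(i/y)\varphi(y)$ for each individual $\varphi$; only then does it invoke the non-vanishing property of $S_c(\mathbb{R}_+)$ to cancel $\varphi(y)$.

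You bypass the Mellin machinery entirely: from the integral representation and the single substitution $y\mapsto 1/y$ you obtain $\int_0^\infty g(y)\varphi(y)\,dy=0$ for all test functions, with $g(y)=f(iy)-i^k\rho(S)\chi(S)y^{-k}f(i/y)$, and then appeal to density to force $g\equiv 0$. This is shorter and avoids analytic continuation and contour shifts altogether. The trade-off is that your last step needs $S_c(\mathbb{R}_+)$ to be rich enough for a genuine density argument, whereas the paper formally uses only the point-separation property (together with closure under $\varphi\mapsto\varphi_s$). Under the natural reading that $S_c(\mathbb{R}_+)$ is the full space of compactly supported piecewise smooth functions, both approaches are valid and yours is the more economical one.
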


\begin{proof}
By the bound for $a_{f,j}(n)$, we see that $f_j(\tau)$ converges absolutely to a smooth function on $\mathbb{H}$ for $1\leq j\leq m$. 
Note that $\varphi_s \in S_c(\mathbb{R}_+)$ for any $s\in\mathbb{C}$ and $\varphi\in S_c(\mathbb{R}_+)$.
Since $\varphi\in S_c(\mathbb{R}_+)$, there exist $0<c_1<c_2$ and $C>0$ such that $\mathrm{Supp}(\varphi) \subset [c_1, c_2]$ and $|\varphi(y)|\leq C$ for any $y>0$. 
Then, for each $1\leq j\leq m$ and $n+\kappa_j>0$, we have
\begin{eqnarray*}
|a_{f,j}(n)| (\mathcal{L}|\varphi_s|) (2\pi (n+\kappa_j)) &\leq& C |a_{f,j}(n)| e^{-2\pi (n+\kappa_j)c_1} (c_2-c_1) \max\{c_1^{\mathrm{Re}(s)-1}, c_2^{\mathrm{Re}(s)-1}\}.
\end{eqnarray*}
This implies that for each $1\leq j\leq m$, we have
\begin{eqnarray*}
\sum_{n\in\mathbb{Z} \atop n+\kappa_j>0} |a_{f,j}(n)| (\mathcal{L}|\varphi_s|)(2\pi (n+\kappa_j)) \leq C(c_2-c_1)\max\{c_1^{\mathrm{Re}(s)-1}, c_2^{\mathrm{Re}(s)-1}\} \sum_{n\in\mathbb{Z} \atop n+\kappa_j>0} |a_{f,j}(n)| e^{-2\pi (n+\kappa_j) c_1}.
\end{eqnarray*}
Therefore, $\varphi_s \in \mathcal{F}_f$, and $L_f(\varphi_s)$ is an analytic function on $s\in\mathbb{C}$ by Weierstrass theorem. 

Recall that by Theorem \ref{property} we have
\[
L_f(\varphi_s) = \int_0^\infty f(iy) \varphi_s(y) dy.
\]
By the Mellin inversion formula, we have
\[
f(iy) \varphi(y) = \frac{1}{2\pi i}\int_{c-i\infty}^{c+i\infty} L_f(\varphi_s) y^{-s} ds
\]
for all $c\in \mathbb{R}$ and $y>0$. 
By integration by parts, we see that for each $1\leq j\leq m$, 
\[
|\langle L_f(\varphi_s), \mbf{e}_j\rangle | \leq \frac{1}{|s|} \int_0^\infty \left|\frac{d}{dy} (f_j(iy)\varphi(y))\right| y^{\mathrm{Re}(s)} dy.
\]
Therefore, $L_f(\varphi_s) \to 0$ as $\mathrm{Im}(s)\to \infty$. 
From this, we have
\begin{eqnarray*}
f(iy) \varphi(y) &=& \frac{1}{2\pi i}\int_{c-i\infty}^{c+i\infty} L_f(\varphi_s) y^{-s}ds=\frac{1}{2\pi i} \int_{k-c-i\infty}^{k-c+i\infty} L_f(\varphi_s) y^{-s}ds\\
&=& \frac{1}{2\pi i} \int_{c-i\infty}^{c+i\infty} L_f(\varphi_{k-s}) y^{-k+s} ds,
\end{eqnarray*}
and by Theorem \ref{property}, we see that
\begin{equation} \label{ftneqn1}
f(iy) \varphi(y) = \frac{i^k}{2\pi i} \rho(S) \int_{c-i\infty}^{c+i\infty} L_f(\varphi_{k-s}|_{2-k,\chi^{-1}} S)y^{-k+s}ds
\end{equation}
for any $y>0$.

Since we have
\[
L_f(\varphi_{k-s}|_{2-k} S) = \int_0^\infty f(iy)(\varphi_{k-s}|_{2-k,\chi^{-1}} S)(y)dy = \int_0^\infty f(iy) \chi(S) \varphi\left( \frac 1y \right) y^{s-1} dy,
\]
the Mellin inversion formula implies that
\[
f(iy) \chi(S) \varphi\left( \frac1y \right)  = \frac{1}{2\pi i}\int_{c-i\infty}^{c+i\infty} L_f(\varphi_{k-s}|_{2-k,\chi^{-1}}S)y^{-s}dy
\]
for all $c\in \mathbb{R}$ and $y>0$.
Therefore, we have
\begin{equation} \label{ftneqn2}
f\left( \frac{i}{y} \right) \chi(S) \varphi(y) = \frac{1}{2\pi i} \int_{c-i\infty}^{c+i\infty} L_f(\varphi_{k-s}|_{2-k,\chi^{-1}}S) y^{s} dy,
\end{equation}
and from (\ref{ftneqn1}) and (\ref{ftneqn2}), we see that 
\[
f(iy) \varphi(y) = i^k y^{-k} \rho(S) \chi(S) f\left( \frac iy \right) \varphi(y)
\]
for any $y>0$.
Since for each $y>0$, there exists $\varphi \in S_c(\mathbb{R}_+)$ such $\varphi(y) \neq 0$, we have
\begin{equation} \label{ftneqn3}
 f(iy)= i^k y^{-k} \rho(S) \chi(S) f\left( \frac iy \right)
\end{equation}
for any $y>0$. 
Since $f$ is a holomorphic function, this implies that $f$ satisfies
\[
f\left( -\frac{1}{\tau} \right) = z^k \chi(S) \rho(S) f(\tau)
\]
for $\tau\in\mathbb{H}$. 
Hence, $f$ satisfies
\[
f|_{k, \chi,\rho}T = f\ \text{and}\ f|_{k,\chi,\rho}S = f.
\]
Since $T$ and $S$ generates $\mathrm{SL}_2(\mathbb{Z})$, we see that $f$ is a weakly holomorphic modular form in $M^!_{k,\chi,\rho}$. 
\end{proof}

\section{$L$-series of vector-valued harmonic weak Maass forms}
In this section, we review basic definitions of vector-valued harmonic weak Maass forms and their $L$-series.  We prove that the properties of $L$-series in Section \ref{Lweakly} also hold in the case of vector-valued harmonic weak Maass forms.  Moreover, we prove a converse theorem and a summation formula for vector-valued harmonic weak Maass forms.

Following \cite{BF, JL2}, we introduce vector-valued harmonic weak Maass forms. 
\begin{dfn} \label{HMF}
A vector-valued harmonic Maass form of weight $k$, multiplier system $\chi$, and type $\rho$ on $\Gamma$ is a real-analytic vector-valued function $f=\sum_{j=1}^{m} f_j\mbf{e}_j$ on $\mathbb{H}$ 
that satisfies the following conditions:
\begin{enumerate}
\item $f|_{k,\chi,\rho}\gamma=f$ for all $\gamma=\sm a&b\\c&d\esm\in \Gamma$.

\item\label{Lap} $\Delta_k f=0$, where $\Delta_k:=-4v^2 \frac{\partial}{\partial \tau} \frac{\partial} {\partial{\overline{\tau}}}+2kiv\frac{\partial}{\partial{\overline{\tau}}}$ is the weight $k$ hyperbolic Laplacian and $\tau=u+iv\in\mathbb{H}$. 

\item It has a Fourier expansion of the form
\begin{align} \label{Fourierharmonic}
f(\tau)&=
\sum_{j=1}^{m}\sum_{n\gg -\infty}c^+_{f,j}(n) e^{2\pi i (n+\kappa_{j})\tau}\mbf{e}_j\\
&\nonumber\quad+ \sum_{j=1}^{m}
\sum_{n+\kappa_j< 0}c^-_{f,j}(n)\Gamma\left(1-k,-4\pi (n+\kappa_{j})v \right) e^{2\pi i (n+\kappa_{j})\tau}\mbf{e}_j,
\end{align}
where $\Gamma(k,w)$ is the analytic continuation of the incomplete gamma function given by $\int_{w}^\infty e^{-t}t^{k-1}dt$.
\end{enumerate}
\end{dfn}

We use $H_{k,\chi,\rho}$ to denote the space of such forms.  
We write $f^{+}$ (resp. $f^-$) for the first (resp. second) summation of (\ref{Fourierharmonic}) and call it the holomorphic (resp. non-holomorphic) part of $f$.
By \cite[Lemma 3.4]{BF}, if $f\in H_{k,\chi,\rho}$ with its Fourier expansion as in (\ref{Fourierharmonic}), then there is a constant $C>0$ such that the Fourier coefficients satisfy
\begin{eqnarray*}
c^+_{f,j}(n) &=& O(e^{C\sqrt{n}}),\ n\to +\infty,\\
c^-_{f,j}(n) &=& O(|n|^{k/2}),\ n\to-\infty.
\end{eqnarray*}

Let $\mathcal{C}(\mathbb{R}, \mathbb{C})$ be the space of piecewise smooth complex-valued functions on $\mathbb{R}$.
Suppose that $k\in\frac12\mathbb{Z}$ and 
 $f:\mathbb{H}\to\mathbb{C}^m$ is a vector-valued function on  $\mathbb{H}$ given by the absolutely convergent series as in (\ref{Fourierharmonic}). 
Let $n_0\in\mathbb{N}$ be such that $f_j(\tau)$ are $O(e^{2\pi n_0 v})$ as $v=\mathrm{Im}(\tau)\to\infty$ for each $1\leq j\leq m$.
Let $\mathcal{F}_f$ be the space of functions $\varphi\in C(\mathbb{R}, \mathbb{C})$ such that 
\begin{enumerate}
\item $(\mathcal{L} \varphi)(s)$ converges absolutely for all $s$ with $\mathrm{Re}(s) \geq -2\pi n_0$,

\item $(\mathcal \varphi_{2-k})(s)$ converges absolutely for all $s$ with $\mathrm{Re}(s)>0$,

\item for each $1\leq j\leq m$, the series
\begin{eqnarray*}
&& \sum_{n\gg-\infty} |c^+_{f,j}(n)| (\mathcal{L} |\varphi|)(2\pi (n+\kappa_j))\\
&& + \sum_{n+\kappa_j<0} |c^-_{f,j}(n)| (4\pi |n+\kappa_j|)^{1-k} \int_0^\infty \frac{(\mathcal{L}|\varphi_{2-k}|)(-2\pi (n+\kappa_j)(2t+1))}{(1+t)^k} dt
\end{eqnarray*}
converges.
\end{enumerate}
For $\varphi\in \mathcal{F}_f$, we define the $L$-series of $f$ by
\begin{eqnarray} \label{vvL}
\nonumber L_f(\varphi) &:=& \sum_{j=1}^m \sum_{n\gg-\infty} c^+_{f,j}(n) (\mathcal{L}\varphi) (2\pi (n+\kappa_j))\ \mbf{e}_j\\
&& + \sum_{j=1}^m \sum_{n+\kappa_j<0} c^-_{f,j}(n)(-4\pi (n+\kappa_j))^{1-k} \int_0^\infty \frac{(\mathcal{L}\varphi_{2-k})(-2\pi (n+\kappa_j)(2t+1))}{(1+t)^k} dt\ \mbf{e}_j.
\end{eqnarray}

We also define the following derivative
\[
(\delta_k f)(\tau) := \tau \frac{\partial f}{\partial u}(\tau) + \frac{k}{2} f(\tau).
\]
Then, we have
\begin{eqnarray*}
(\delta_k f)(\tau) &=& \frac{k}{2} f(\tau) + \sum_{j=1}^m \sum_{n\gg-\infty} c^+_{f,j}(n) (2\pi i(n+\kappa_j)\tau)e^{2\pi i(n+\kappa_j)\tau}\mbf{e}_j\\
&& + \sum_{j=1}^m \sum_{n+\kappa_j<0} c^-_{f,j}(n) (2\pi i(n+\kappa_j))\Gamma(1-k, -4\pi (n+\kappa_j)v) e^{2\pi i(n+\kappa_j)\tau}\mbf{e}_j.
\end{eqnarray*}
Let $\mathcal{F}_{\delta_k f}$ be the space of functions $\varphi\in C(\mathbb{R}, \mathbb{C})$ such that 
\begin{enumerate}
\item $(\mathcal{L} \varphi)(s)$ converges absolutely for all $s$ with $\mathrm{Re}(s) \geq -2\pi n_0$,

\item $(\mathcal \varphi_{3-k})(s)$ converges absolutely for all $s$ with $\mathrm{Re}(s)>0$,

\item
for each $1\leq j\leq m$ the series
\begin{eqnarray*}
&& \sum_{n\gg-\infty} |c^+_{f,j}(n)(n+\kappa_j)| (\mathcal{L} |\varphi_2|)(2\pi (n+\kappa_j))\\
&& + \sum_{n+\kappa_j<0} |c^-_{f,j}(n)(n+\kappa_j)| (4\pi |n+\kappa_j|)^{1-k} \int_0^\infty \frac{(\mathcal{L}|\varphi_{3-k}|)(-2\pi (n+\kappa_j)(2t+1))}{(1+t)^k} dt
\end{eqnarray*} 
converges.
\end{enumerate}
For $\varphi\in \mathcal{F}_{\delta_k f}$, we define $L_{\delta_k f}(\varphi)$ by
\begin{eqnarray} \label{vvLdelta}
\nonumber L_{\delta_k f}(\varphi) &:=& \frac{k}{2}L_f(\varphi) -2\pi  \sum_{j=1}^m \sum_{n\gg-\infty} c^+_{f,j}(n) (n+\kappa_j) (\mathcal{L}\varphi_2) (2\pi (n+\kappa_j))\ \mbf{e}_j\\
&&\qquad   -2\pi \sum_{j=1}^m \sum_{n+\kappa_j<0} c^-_{f,j}(n)(n+\kappa_j)(-4\pi (n+\kappa_j))^{1-k}\\
\nonumber &&\qquad \qquad \times \int_0^\infty \frac{(\mathcal{L}\varphi_{3-k})(-2\pi (n+\kappa_j)(2t+1))}{(1+t)^k} dt\ \mbf{e}_j.
\end{eqnarray}
Then, the series $L_f(\varphi)$ and $L_{\delta_k f}(\varphi)$ have integral representations.

\begin{thm} \label{integralrepresentation}
Let $f:\mathbb{H}\to\mathbb{C}^m$ be a vector-valued function on $\mathbb{H}$ as a series in (\ref{Fourierharmonic}). 
\begin{enumerate}
\item For $\varphi\in \mathcal{F}_f$, the $L$-series $L_f(\varphi)$ can be given by
\[
L_f(\varphi) = \int_0^\infty f(iy)\varphi(y)dy.
\]

\item For $\varphi\in \mathcal{F}_{\delta_k f}$, we have
\[
L_{\delta_k f}(\varphi) = \int_0^\infty (\delta_k f)(iy) \varphi(y) dy.
\]
\end{enumerate}
\end{thm}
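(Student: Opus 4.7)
The plan is to substitute the Fourier expansion of $f$ into the right-hand side, interchange the order of summation and integration term-by-term, and recognize the resulting inner integrals as the Laplace-type expressions appearing in (\ref{vvL}) and (\ref{vvLdelta}). The legitimacy of the interchange is precisely what conditions (1)--(3) in the definition of $\mathcal{F}_f$ (respectively $\mathcal{F}_{\delta_k f}$) are designed to guarantee, so once Fubini applies, the argument reduces to two distinct calculations: one trivial, one involving the incomplete gamma function.

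For part (1), after substituting $\tau=iy$, the holomorphic part of $f$ contributes $\sum_j \sum_n c^+_{f,j}(n) \int_0^\infty e^{-2\pi(n+\kappa_j)y}\varphi(y)\,dy\,\mathbf{e}_j$, which is immediately the first sum in (\ref{vvL}) by the definition of the Laplace transform. The interesting step is the non-holomorphic part. For each $n$ with $n+\kappa_j<0$, I would set $\alpha := -2\pi(n+\kappa_j)>0$ and write out the incomplete gamma function as $\Gamma(1-k,2\alpha y)=\int_{2\alpha y}^\infty e^{-t}t^{-k}\,dt$. Then the relevant contribution is
\[
c^-_{f,j}(n)\int_0^\infty e^{\alpha y}\varphi(y)\int_{2\alpha y}^\infty e^{-t}t^{-k}\,dt\,dy.
\]
Performing the substitution $t=2\alpha y(s+1)$ (so $dt = 2\alpha y\,ds$ and $s\in(0,\infty)$) factors out $(2\alpha)^{1-k}=(-4\pi(n+\kappa_j))^{1-k}$ and a factor $(s+1)^{-k}y^{1-k}$, leaving the inner $y$-integral as $(\mathcal{L}\varphi_{2-k})(\alpha(2s+1))$ after another application of Fubini to swap the $y$ and $s$ integrations. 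The result matches the second sum of (\ref{vvL}) exactly. Condition (3) of $\mathcal{F}_f$ ensures that all of the interchanges of order (sum/integral and the double integral in $y,s$) are justified by absolute convergence.

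For part (2), the operator $\delta_k f(\tau) = \tau\,\partial_u f(\tau) + \tfrac{k}{2}f(\tau)$ acts on each Fourier exponential by producing an extra factor of $2\pi i(n+\kappa_j)\tau$, which at $\tau=iy$ becomes $-2\pi(n+\kappa_j)y$; the incomplete gamma terms pick up the same factor since $\Gamma(1-k,-4\pi(n+\kappa_j)v)$ is independent of $u$. Integrating $(\delta_k f)(iy)\varphi(y)$ thus splits as $(k/2)\int_0^\infty f(iy)\varphi(y)\,dy$ plus $-2\pi\sum_{j,n}(n+\kappa_j)(\cdots)\int_0^\infty y\,(\cdots)\,\varphi(y)\,dy$. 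The first piece is $(k/2)L_f(\varphi)$ by part (1), while in the second piece the extra $y$ replaces $\varphi$ by $\varphi_2$ in the holomorphic sum and, in the non-holomorphic sum, promotes $\varphi_{2-k}$ to $\varphi_{3-k}$ in the same Laplace integral as before. This reproduces (\ref{vvLdelta}), and the analogous convergence hypotheses in the definition of $\mathcal{F}_{\delta_k f}$ justify the interchanges.

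The main obstacle is the non-holomorphic Fubini argument: keeping track of the signs in $\alpha=-2\pi(n+\kappa_j)$, choosing the right substitution $t=2\alpha y(s+1)$ to convert the incomplete gamma integral into a form in which the $y$-integral reproduces a Laplace transform of $\varphi_{2-k}$, and verifying from condition (3) of $\mathcal{F}_f$ that the resulting double sum and double integral converge absolutely so that Fubini and term-by-term integration are legitimate.
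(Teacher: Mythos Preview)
Your proposal is correct and follows essentially the same route as the paper: handle the holomorphic part by the definition of the Laplace transform, and for the non-holomorphic part rewrite $\Gamma(1-k,2\alpha y)$ as an integral over $(0,\infty)$ and swap the $y$- and $s$-integrations to produce $(\mathcal{L}\varphi_{2-k})(\alpha(2s+1))$. The only cosmetic difference is that the paper quotes the identity $\Gamma(a,z)=z^a e^{-z}\int_0^\infty e^{-zt}(1+t)^{a-1}\,dt$ from \cite[(8.6.5)]{OLBC}, whereas you obtain the same formula directly via the substitution $t=2\alpha y(s+1)$; the paper also simply asserts that part~(2) follows by the same argument, while you spell out how the extra factor of $y$ shifts $\varphi$ to $\varphi_2$ and $\varphi_{2-k}$ to $\varphi_{3-k}$.
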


\begin{proof}
We only prove (1) since the same proof works for (2).
 For the holomorphic part of $f$, we have
\begin{eqnarray*}
&&\sum_{j=1}^m \sum_{n\gg-\infty} c^+_{f,j}(n) (\mathcal{L} \varphi_j)(2\pi (n+\kappa_j))\mbf{e}_j\\
&&= \sum_{j=1}^m \sum_{n\gg-\infty} c^+_{f,j}(n) \int_0^\infty e^{-2\pi (n+\kappa_j)t}\varphi_j(t)dt \mbf{e}_j\\
&&= \sum_{j=1}^m  \left(\int_0^\infty  f^+_j(it) \varphi_j(t)  dt\right) \mbf{e}_j.
\end{eqnarray*}
The last equality follows from the fact that we can interchange the order of summation and integration since $\varphi\in \mathcal{F}_f$.

For the nonholomorphic part of $f$, note that
\[
\Gamma(a, z) = z^a e^{-z} \int_0^\infty \frac{e^{-zt}}{(1+t)^{1-a}}dt,
\]
is valid for $\mathrm{Re}(z)>0$ \cite[(8.6.5)]{OLBC}.
Therefore, for $n+\kappa_j<0$, we have
\begin{eqnarray*}
&&\int_0^\infty \Gamma(1-k, -4\pi(n+\kappa_j) y) e^{-2\pi (n+\kappa_j) u} \varphi(y) dy \\
&&=\int_0^\infty (-4\pi(n+\kappa_j) y)^{1-k} e^{4\pi (n+\kappa_j)y} e^{-2\pi (n+\kappa_j) y} \varphi(y) \int_0^\infty \frac{e^{4\pi(n+\kappa_j)yt}}{(1+t)^k} dtdy\\
&&= (-4\pi(n+\kappa_j))^{1-k}
\int_0^\infty \int_0^\infty y^{1-k}e^{2\pi(n+\kappa_j)y(2t+1)}\varphi(y)dy \frac{1}{(1+t)^k}dt\\
&& = (-4\pi(n+\kappa_j))^{1-k} \int_0^\infty \frac{(\mathcal{L}\varphi_{2-k})(-2\pi (n+\kappa_j)(2t+1))}{(1+t)^k} dt.
\end{eqnarray*}
Therefore, we obtain
\begin{eqnarray*}
&& \sum_{j=1}^m \sum_{n+\kappa_j<0} c^-_{f,j}(n)(-4\pi (n+\kappa_j))^{1-k} \int_0^\infty \frac{(\mathcal{L}\varphi_{2-k})(-2\pi (n+\kappa_j)(2t+1))}{(1+t)^k} dt\ \mbf{e}_j\\
&&= \sum_{j=1}^m \sum_{n+\kappa_j<0} c^-_{f,j}(n) \int_0^\infty \Gamma(1-k, -4\pi(n+\kappa_j)y) e^{-2\pi (n+\kappa_j)y}\varphi(y)dy \mbf{e}_j\\
&&= \int_0^\infty f^-(iy)\varphi(y)dy.
\end{eqnarray*}
\end{proof}

We now prove the functional equations of $L_f(\varphi)$ and $L_{\delta_k f}(\varphi)$.

\begin{thm} \label{ftnaleqn}
Let $f$ be a vector-valued harmonic weak Maass form in $H_{k,\chi,\rho}$.
Then, we have the following functional equations:
\[
L_f(\varphi) = i^k \rho(S) L_f(\varphi|_{2-k,\chi^{-1}} S)
\]
and
\[
L_{\delta_k f}(\varphi) = -i^k \rho(S) L_{\delta_k f}(\varphi|_{2-k, \chi^{-1}} S).
\]
\end{thm}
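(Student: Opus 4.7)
The plan is to reduce both identities, via Theorem \ref{integralrepresentation}, to integral representations on the positive imaginary axis:
\[
L_f(\varphi) = \int_0^\infty f(iy)\varphi(y)\,dy, \qquad L_{\delta_k f}(\varphi) = \int_0^\infty (\delta_k f)(iy)\varphi(y)\,dy.
\]
The first functional equation then follows by exactly the argument used in Theorem \ref{property}(3). The transformation law $f|_{k,\chi,\rho}S=f$ (which is part of the definition of $H_{k,\chi,\rho}$ and does not require holomorphy) still yields $f(iy)=(iy)^{-k}\chi^{-1}(S)\rho^{-1}(S)f(i/y)$ for $y>0$. Substituting $y\mapsto 1/y$ in the integral and invoking this identity produces
\[
L_f(\varphi)=i^k\rho(S)\int_0^\infty f(iy)\,y^{k-2}\chi(S)\varphi(1/y)\,dy = i^k\rho(S)L_f(\varphi|_{2-k,\chi^{-1}}S).
\]

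For the second identity the key step is an auxiliary transformation law
\[
(\delta_k f)(i/y)=-(iy)^k\chi(S)\rho(S)(\delta_k f)(iy)
\]
on the positive imaginary axis. I would derive this by differentiating the modular relation $f(-1/\tau)=\tau^k\chi(S)\rho(S)f(\tau)$ with respect to $u=\mathrm{Re}(\tau)$. Writing $-1/\tau=U(u,v)+iV(u,v)$, the real chain rule at $\tau=iy$ contributes only through $\partial_u U|_{(0,y)}=-1/y^2$ and $\partial_u V|_{(0,y)}=0$, giving
\[
-\frac{1}{y^2}(\partial_u f)(i/y) = k(iy)^{k-1}\chi(S)\rho(S) f(iy) + (iy)^k\chi(S)\rho(S)(\partial_u f)(iy).
\]
Multiplying by $i/y$, adding $(k/2)f(i/y)=(k/2)(iy)^k\chi(S)\rho(S)f(iy)$, and observing that $-k(iy)^k+(k/2)(iy)^k=-(k/2)(iy)^k$, the result factors as $-(iy)^k\chi(S)\rho(S)\bigl[(iy)(\partial_u f)(iy)+(k/2)f(iy)\bigr]$, which is the stated transformation.

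With this transformation in hand, the second functional equation follows from the same change of variables $y\mapsto 1/y$ as in the first part, the minus sign propagating to the right-hand side. I would also briefly verify that $\varphi|_{2-k,\chi^{-1}}S$ lies in $\mathcal{F}_f$ and $\mathcal{F}_{\delta_k f}$, so that the integral representations of Theorem \ref{integralrepresentation} apply on both sides; this is straightforward from the definitions since the slash replaces $\varphi(y)$ by $y^{k-2}\chi(S)\varphi(1/y)$, whose growth at $0$ and $\infty$ is controlled by that of $\varphi$. The main obstacle is the differentiation step for $\delta_k f$: because $f$ is only real-analytic, one cannot apply the holomorphic chain rule to $f(-1/\tau)$, and the precise cancellation yielding the clean factor $-(iy)^k\chi(S)\rho(S)$ depends critically on $\delta_k$ being defined through $\partial_u$ rather than $\partial_\tau$ or $\partial_{\bar\tau}$.
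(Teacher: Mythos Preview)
Your proposal is correct and follows essentially the same route as the paper: both deduce the first identity from the integral representation of Theorem~\ref{integralrepresentation}, the relation $f(iy)=(iy)^{-k}\chi^{-1}(S)\rho^{-1}(S)f(i/y)$, and the substitution $y\mapsto 1/y$; for the second identity, both differentiate the modular relation to show that $\delta_k f$ transforms under $S$ with an extra minus sign, and then repeat the same change of variables. One cosmetic slip in your write-up: to pass from $-(1/y^{2})(\partial_u f)(i/y)=\cdots$ to $(i/y)(\partial_u f)(i/y)=\cdots$ you should multiply by $-iy$ rather than $i/y$; with that correction your factoring $-(iy)^k\chi(S)\rho(S)\bigl[(iy)(\partial_u f)(iy)+\tfrac{k}{2}f(iy)\bigr]$ is exactly right, and in fact your explicit restriction to the imaginary axis (where $\partial_u V=0$) is a bit more careful than the paper's version of the chain-rule step.
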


\begin{proof}
For the first equality, note that since $f\in H_{k,\chi,\rho}$, we see that 
\[
f(iy) = (iy)^{-k} \chi^{-1}(S) \rho(S)^{-1} f\left(i\frac1y\right)
\]
for $y>0$. 
Then, by Theorem \ref{integralrepresentation}, we have
\begin{eqnarray*}
L_f(\varphi) &=& \int_0^\infty f(iy)\varphi(y)dy\\
&=& \int_0^\infty f\left( i\frac 1y \right) \varphi \left( \frac 1y \right) y^{-2}dy\\
&=& \int_0^\infty  (iy)^k \chi(S) \rho(S) f(iy) \varphi \left( \frac 1y \right) y^{-2}dy\\
&=& i^k \rho(S) \int_0^\infty f(iy) y^{k-2} \chi(S) \varphi \left( \frac 1y\right) dy\\
& =& i^k \rho(S) L_f(\varphi|_{2-k, \chi^{-1}}S).
\end{eqnarray*}

For the second equality, note that
\begin{eqnarray*}
(\delta_k(f|_k S))(\tau) &=& \tau (-k) \tau^{-k-1} \chi^{-1}(S) \rho(S)^{-1} f\left(-\frac1\tau \right) + \tau \tau^{-k}\chi^{-1}(S)\rho(S)^{-1}\frac{\partial f}{\partial u} \left(-\frac 1\tau \right) \frac{1}{\tau^2}\\
&&+ \frac k2 \tau^{-k}\chi^{-1}(S)\rho(S)^{-1} f\left( -\frac 1\tau \right)\\
&=& -\frac k2 \tau^{-k} \chi^{-1}(S) \rho(S)^{-1} f\left( -\frac 1\tau \right) - \tau^{-k}\chi^{-1}(S)\rho(S)^{-1}\frac{\partial f}{\partial u}\left( -\frac 1\tau \right) \frac 1\tau\\
&=& - (\delta_k(f)|_k S)(\tau).
\end{eqnarray*}
Therefore, we have
\[
((\delta_k f)|S)(\tau) = -(\delta_k(f|_k S))(\tau) = -(\delta_k f)(\tau).
\]
From this and Theorem \ref{integralrepresentation}, we see that
\begin{eqnarray*}
L_{\delta_k f}(\varphi) &=& \int_0^\infty (\delta_k f)(iy) \varphi(iy) dy\\
&=& \int_0^\infty (\delta_k f)\left( i\frac 1y \right) \varphi \left( \frac1y \right) y^{-2} dy\\
&=& i^k \rho(S) \int_0^\infty (\delta_k f|_k S)(iy) (\varphi|_{2-k,\chi^{-1}}S)(y)dy\\
&=& -i^k \rho(S) \int_0^\infty (\delta_k f)(iy) (\varphi_{2-k, \chi^{-1}}S)(y)dy\\
&=&-i^k \rho(S) L_{\delta_k f}(\varphi|_{2-k, \chi^{-1}}S).
\end{eqnarray*}
\end{proof}

Recall that $\varphi_s(x) = \varphi(x) x^{s-1}$.
We define the series $L(s,f,\varphi)$ by
\[
L(s,f,\varphi) := L_f(\varphi_s).
\]
Then, we prove that the series $L(s,f,\varphi)$ has an analytic continuation and satisfies a functional equation. 

\begin{thm} \label{harmonicvarphi_s}
Let $f\in H_{k, \chi,\rho}$, and  $n_0\in\mathbb{N}$ be such that $f_j(\tau)$ are $O(e^{2\pi n_0 v})$ as $v=\mathrm{Im}(\tau)\to\infty$ for each $1\leq j\leq m$. 
Suppose that $\varphi \in \mathcal{C}(\mathbb{R}, \mathbb{C})$ is a non-zero function such that, for some $\epsilon>0$, $\varphi(x)$ and $\varphi(x^{-1})$ are $o(e^{-2\pi(n_0+\epsilon)x})$ as $x\to\infty$. 
 Then the series 
$
L(s,f,\varphi) 
$
converges absolutely for $\mathrm{Re} > \frac12$, has an analytic continuation to all $s\in\mathbb{C}$ and satisfies the functional equation
\[
L(s,f,\varphi) = i^k \rho(S) L(1-s, f, \varphi|_{1-k,\chi^{-1}} S).
\]
\end{thm}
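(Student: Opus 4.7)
The plan is to adapt the argument of Theorem \ref{varphi_s} to the harmonic weak Maass form setting, using the integral representation from Theorem \ref{integralrepresentation} and the functional equation of Theorem \ref{ftnaleqn} as the main technical inputs.

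First, I would verify that $\varphi_s \in \mathcal{F}_f$ for $\mathrm{Re}(s) > \tfrac12$. The rapid decay hypothesis on $\varphi$ guarantees that $(\mathcal{L}\varphi_s)(y)$ converges absolutely for $y \geq -2\pi n_0$, and that $(\mathcal{L}(\varphi_s)_{2-k})(y)$ converges absolutely for $\mathrm{Re}(y) > 0$; this handles conditions (1) and (2) of the definition of $\mathcal{F}_f$. For condition (3), the holomorphic-part estimate reuses the Cauchy--Schwarz bound of the proof of Theorem \ref{varphi_s}, namely $(\mathcal{L}|\varphi_s|)(y) \leq (\mathcal{L}(|\varphi|^2)(y))^{1/2}\, y^{-\mathrm{Re}(s)+1/2}\, (\Gamma(2\mathrm{Re}(s)-1))^{1/2}$, together with the $O(e^{C\sqrt{n}})$ growth of $c^+_{f,j}(n)$ from \cite[Lemma 3.4]{BF}. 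For the non-holomorphic part, there are only finitely many terms (since $n+\kappa_j<0$ and the Fourier expansion is bounded below), so absolute convergence of the double integral is automatic once the rapid decay of $\varphi$ ensures each integral converges.

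Next, applying Theorem \ref{integralrepresentation}(1) gives
\[
L(s,f,\varphi) = \int_0^\infty f(iy) \varphi(y) y^{s-1}\, dy.
\]
I would split the integral at $y=1$, apply the modular transformation $f(iy) = (iy)^{-k}\chi^{-1}(S)\rho(S)^{-1} f(i/y)$ to the $(0,1)$ piece, and substitute $y \mapsto 1/y$ to obtain
\[
L(s,f,\varphi) = \int_1^\infty f(iy)\varphi(y) y^{s-1}\, dy + i^k \rho(S) \int_1^\infty f(iy) (\varphi|_{1-k,\chi^{-1}} S)(y) y^{-s}\, dy.
\]
Both integrands decay rapidly at infinity, since $f(iy) = O(e^{2\pi n_0 y})$ while $\varphi(y)$ and $\varphi(1/y)$ are $o(e^{-2\pi(n_0+\epsilon)y})$; thus each integral defines an entire holomorphic function of $s$, furnishing the analytic continuation.

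The functional equation is then immediate from the manifest symmetry of the displayed expression under $(s, \varphi) \mapsto (1-s, \varphi|_{1-k,\chi^{-1}}S)$, once one notes that $\rho(-I_2)\chi(-I_2) = -I_m$ (inherited from the modular invariance of $f \in H_{k,\chi,\rho}$) reconciles the sign when $(\varphi|_{1-k,\chi^{-1}}S)|_{1-k,\chi^{-1}}S$ is returned to $\varphi$. The main technical obstacle is the verification of condition (3) for $\varphi_s$ on the holomorphic part: because $c^+_{f,j}(n)$ grows as $O(e^{C\sqrt{n}})$ rather than polynomially, one must combine the Cauchy--Schwarz bound above with the fact that $\mathcal{L}(|\varphi|^2)$ has abscissa of convergence at most $-2\pi(n_0+\epsilon)$ (strictly below $-2\pi n_0$) to absorb the exponential growth of the coefficients, yielding absolute convergence throughout the half-plane $\mathrm{Re}(s) > \tfrac12$.
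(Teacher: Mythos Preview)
Your overall approach matches the paper's proof, which is essentially a verbatim repetition of the argument for Theorem \ref{varphi_s}: Cauchy--Schwarz to place $\varphi_s$ in $\mathcal{F}_f$, the integral representation, splitting at $y=1$, and the relation $\rho(-I_2)\chi(-I_2)=-I_m$ for the functional equation.

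There is, however, one genuine error in your verification of condition (3). The non-holomorphic part of the Fourier expansion (\ref{Fourierharmonic}) is indexed by \emph{all} integers $n$ with $n+\kappa_j<0$, hence by infinitely many $n$; you have apparently confused it with the principal part of the holomorphic piece (the finitely many $n<0$ appearing in $\sum_{n\gg-\infty}$). So ``finitely many terms, hence automatic'' is false. The correct way to bound the non-holomorphic sum is to use the polynomial estimate $c^-_{f,j}(n)=O(|n|^{k/2})$ from \cite[Lemma~3.4]{BF} together with the rapid decay of $\varphi$: since $(\varphi_s)_{2-k}(x)=\varphi(x)x^{s-k}$ and $\varphi(x)=o(e^{-2\pi(n_0+\epsilon)x})$, the Laplace transform $(\mathcal{L}|(\varphi_s)_{2-k}|)(-2\pi(n+\kappa_j)(2t+1))$ decays at least exponentially in $|n|(2t+1)$, which dominates the polynomial growth of the coefficients and the factor $|4\pi(n+\kappa_j)|^{1-k}$. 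The paper's own proof in fact glosses over this point entirely (it literally copies the weakly holomorphic argument), so your instinct to say something here was right; only the justification needs to be replaced.
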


\begin{proof}
By the growth of $\varphi$, we see that $\mathcal{L}(|\varphi|^2)(y)$ converges absolutely for $y\geq -2\pi n_0$. 
For $y>0$ and $s\in\mathbb{C}$ with $\mathrm{Re}(s)>\frac12$, Cauchy-Schwarz inequality implies that
\[
(\mathcal{L}|\varphi_s|)(y) \leq (\mathcal{L}(|\varphi|^2)(y))^{\frac12} y^{-\mathrm{Re}(s)+\frac12} (\Gamma(2\mathrm{Re}(s)-1))^{\frac12}.
\]
Therefore, $\varphi_s\in \mathcal{F}_f$ for $\mathrm{Re}(s)>\frac12$. 

Recall that 
\[
f(iy) = (iy)^{-k} \chi^{-1}(S) \rho(S)^{-1} f\left(i\frac1y\right)
\]
for $y>0$. 
Therefore, we have
\begin{eqnarray*}
L(s, f, \varphi) &=& L_f(\varphi_s) = \int_0^\infty f(iy)\varphi_s(y)dy\\
&=& \int_0^\infty f(iy) \varphi(s) y^{s-1} dy\\
&=& \int_1^\infty f(iy) \varphi(y)y^{s-1} dy + \int_0^1 f(iy)\varphi(y)y^{s-1}dy\\
&=& \int_1^\infty f(iy) \varphi(y)y^{s-1} dy + \int_1^\infty f\left( i\frac1y \right) \varphi \left( \frac 1y \right) y^{1-s}y^{-2}dy\\
&=& \int_1^\infty f(iy) \varphi(y)y^{s-1} dy + i^k \rho(S) \int_1^\infty f(iy) \left(\varphi|_{1-k,\chi^{-1}}S\right)(y) y^{-s}dy.
\end{eqnarray*}
By the growth of $\varphi$ at $0$ and $\infty$, we see that the integrals 
\[
\int_1^\infty f(iy) \varphi(y)y^{s-1} dy\  \text{and}\ \int_1^\infty f(iy) \left(\varphi|_{1-k,\chi^{-1}}S\right)(y) y^{-s}dy
\]
 are well-defined for all $s\in\mathbb{C}$, and give holomorphic functions.

Since $f\in H_{k, \chi,\rho}$, we obtain that $\rho(-I_2) \chi(-I_2) = - I_m$, where $I_m$ denotes the identity matrix of size $m$. 
Therefore, we get the desired functional equation. 
\end{proof}

We now state the converse theorem in the case of vector-valued harmonic weak Maass forms.

\begin{thm} \label{converse}
For each $1\leq j\leq m$, let $(c^+_{f,j}(n))_{n\geq-n_0}$ and $(c^-_{f,j}(n))_{n+\kappa_j<0}$ be sequences of complex numbers such that $c^+_{f,j}(n), c^-_{f,j}(n) = O(e^{C\sqrt{|n|}})$ as $|n|\to\infty$, for some $C>0$. 
For each $\tau\in\mathbb{H}$, set
\[
f(\tau) := \sum_{j=1}^m \sum_{n=-n_0}^\infty c^+_{f,j}(n)e^{2\pi i(n+\kappa_j)\tau}\mbf{e}_j +  \sum_{j=1}^m \sum_{n+\kappa_j<0}c^-_{f,j}(n) \Gamma(1-k, -4\pi (n+\kappa_j)v)e^{2\pi i(n+\kappa_j)\tau} \mbf{e}_j.
\]
Suppose that for each $\varphi\in S_c(\mathbb{R}_+)$, the functions $L_f(\varphi)$ and $L_{\delta_k f}(\varphi)$ satisfy 
\[
L_f(\varphi) = i^k \rho(S) L_f(\varphi|_{2-k, \chi^{-1}} S)
\]
and
\[
L_{\delta_k f}(\varphi) = -i^k \rho(S) L_{\delta_k f}(\varphi|_{2-k, \chi^{-1}}S).
\]
Then, $f$ is a vector-valued harmonic weak Maass form in $H_{k, \chi,\rho}$.
\end{thm}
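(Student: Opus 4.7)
The plan is to adapt the proof of Theorem 2.4 to the harmonic Maass form setting. Two modifications are essential: we must also exploit the functional equation of $L_{\delta_k f}$, and we must promote the resulting transformation law from the imaginary axis to all of $\mathbb{H}$ despite $f$ being merely real-analytic rather than holomorphic. First, the assumed growth bound $c^{\pm}_{f,j}(n) = O(e^{C\sqrt{|n|}})$ ensures that the series defining $f$ converges to a real-analytic vector-valued function on $\mathbb{H}$, and a term-by-term computation shows that each holomorphic summand $e^{2\pi i(n+\kappa_j)\tau}$ and each Whittaker-type summand $\Gamma(1-k,-4\pi(n+\kappa_j)v)e^{2\pi i(n+\kappa_j)\tau}$ is annihilated by $\Delta_k$, so $\Delta_k f = 0$. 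The $T$-invariance $f|_{k,\chi,\rho}T = f$ is immediate from the Fourier expansion once the $\kappa_j$ are matched to the eigenvalues of $\chi(T)\rho(T)$, which is built into the setup.

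Next I would verify that $\varphi_s \in \mathcal{F}_f \cap \mathcal{F}_{\delta_k f}$ for every $\varphi \in S_c(\mathbb{R}_+)$, by the compact-support estimate used in the proof of Theorem 2.4 (the additional factor $n+\kappa_j$ in the definition of $\mathcal{F}_{\delta_k f}$ is dominated by the exponential coming from compact support), so that $L_f(\varphi_s)$ and $L_{\delta_k f}(\varphi_s)$ are well-defined and holomorphic in $s \in \mathbb{C}$. Using the integral representation of Theorem 3.3(1), Mellin inversion at a real abscissa $c$, a contour shift $c \mapsto k-c$ justified by integration-by-parts bounds, the substitution $s \mapsto k-s$, and the first assumed functional equation, the argument of Theorem 2.4 transfers verbatim to give
\[
f(iy) = i^k y^{-k}\chi(S)\rho(S)\,f(i/y) \qquad (y>0).
\]
Running the identical procedure with Theorem 3.3(2) and the second assumed functional equation yields
\[
(\delta_k f)(iy) = -i^k y^{-k}\chi(S)\rho(S)\,(\delta_k f)(i/y) \qquad (y>0).
\]

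The decisive step is extending these axial identities to all of $\mathbb{H}$. Set $h(\tau) := (f|_{k,\chi,\rho}S)(\tau) - f(\tau)$; by the invariance of $\Delta_k$ under the slash operator and $\Delta_k f = 0$, we have $\Delta_k h = 0$. The first axial identity gives $h(iy) = 0$ for all $y>0$. Combining the identity $\delta_k(f|_{k,\chi,\rho}S) = -(\delta_k f)|_{k,\chi,\rho}S$ extracted from the proof of Theorem 3.5 with the second axial identity, a short algebraic computation (using the consistency relation between $\chi(-I_2)$, $\rho(-I_2)$ and $k$ that is already invoked in Theorem 3.5) produces $(\delta_k h)(iy) = 0$; since $(\delta_k h)(iy) = iy\,\partial_u h(iy) + \tfrac{k}{2}h(iy)$ and $h(iy) = 0$, this forces $\partial_u h(iy) = 0$ as well. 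Thus $h$ is real-analytic, $\Delta_k$-harmonic, and has vanishing Cauchy data on the imaginary axis. Rewriting $\Delta_k h = 0$ in real coordinates as
\[
\partial_u^2 h = -\partial_v^2 h + \frac{ki}{v}\partial_u h - \frac{k}{v}\partial_v h,
\]
a straightforward induction on the order of $\partial_u$ (using that all pure $\partial_v$-derivatives of $h$ vanish on the axis because $h(iy)\equiv 0$, and that the base case $\partial_u h(iy)=0$ is already known) shows that every mixed partial derivative of $h$ vanishes on the imaginary axis. Real-analyticity then forces $h \equiv 0$ in a neighborhood of the axis, and hence on all of $\mathbb{H}$ by connectedness, so $f|_{k,\chi,\rho}S = f$. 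Together with the $T$-invariance and the fact that $S,T$ generate $\mathrm{SL}_2(\mathbb{Z})$, this delivers the full modular invariance and completes the proof that $f \in H_{k,\chi,\rho}$.

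The principal obstacle is the final extension step: the one-variable identity theorem that made the corresponding argument in Theorem 2.4 essentially automatic is unavailable when $f$ is not holomorphic. The second assumed functional equation is precisely what supplies the missing normal-derivative Cauchy datum on the axis, and the new technical input is the Taylor-propagation argument against the explicit form of $\Delta_k$ in $(u,v)$-coordinates. Verifying that this propagation is rigorous (rather than formal) is the most delicate point and should be written out with some care.
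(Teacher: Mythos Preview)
Your proposal is correct and follows essentially the same approach as the paper: derive the two axial identities for $f$ and $\delta_k f$ via the integral representation, Mellin inversion, contour shift, and the assumed functional equations, then deduce $S$-invariance from the vanishing of the difference function and its normal derivative on the imaginary axis. The only distinction is that the paper dispatches the extension step in one line (``$F\equiv 0$ since $F$ is an eigenfunction of the Laplace operator''), whereas you spell out the underlying unique-continuation argument by rewriting $\Delta_k h=0$ in $(u,v)$-coordinates and propagating the Cauchy data inductively through the Taylor expansion; this is the same idea made explicit, and your caution that this step deserves care is well placed.
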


\begin{proof}
By the bounds for $c^+_{f,j}(n)$ and $c^-_{f,j}(n)$, we see that $f_j(\tau)$ and $(\delta_k(f))_j$ converge absolutely to  smooth functions on $\mathbb{H}$ for $1\leq j\leq m$. 
Note that $\varphi_s \in S_c(\mathbb{R}_+)$ for any $s\in\mathbb{C}$ and $\varphi\in S_c(\mathbb{R}_+)$.
Since $\varphi\in S_c(\mathbb{R}_+)$, there exist $0<c_1<c_2$ and $C>0$ such that $\mathrm{Supp}(\varphi) \subset [c_1, c_2]$ and $|\varphi(y)|\leq C$ for any $y>0$. 
Then, for each $1\leq j\leq m$ and $n+\kappa_j>0$, we have
\begin{eqnarray*}
|c^+_{f,j}(n)| (\mathcal{L}|\varphi_s|) (2\pi (n+\kappa_j)) &\leq& C |c^+_{f,j}(n)| e^{-2\pi (n+\kappa_j)c_1} (c_2-c_1) \max\{c_1^{\mathrm{Re}(s)-1}, c_2^{\mathrm{Re}(s)-1}\}.
\end{eqnarray*}
This implies that for each $1\leq j\leq m$, we have
\begin{eqnarray*}
\sum_{n\in\mathbb{Z} \atop n+\kappa_j>0} |c^+_{f,j}(n)| (\mathcal{L}|\varphi_s|)(2\pi (n+\kappa_j)) \leq C(c_2-c_1)\max\{c_1^{\mathrm{Re}(s)-1}, c_2^{\mathrm{Re}(s)-1}\} \sum_{n\in\mathbb{Z} \atop n+\kappa_j>0} |c^+_{f,j}(n)| e^{-2\pi (n+\kappa_j) c_1}.
\end{eqnarray*}

For each $1\leq j\leq m$ and $n+\kappa_j<0$, we have
\begin{eqnarray*}
(\mathcal{L}|\varphi_{s+1-k}|)(-2\pi (n+\kappa_j)(2t+1)) &\ll& \int_{c_1}^{c_2} e^{2\pi(n+\kappa_j)(2t+1)} y^{\mathrm{Re}(s)-k} dy\\
&\ll& e^{2\pi(n+\kappa_j)c_1(2t+1)}\max\{c_1^{\mathrm{Re}(s)-k}, c_2^{\mathrm{Re}(s)-k}\}
\end{eqnarray*}
This implies that 
\begin{eqnarray*}
&&\sum_{n+\kappa_j<0} |c_{f,j}^-(n)| |4\pi (n+\kappa_j)|^{1-k} \int_0^\infty \frac{(\mathcal{L}|\varphi_{s+1-k})(-2\pi (n+\kappa_j)(2t+1)}{(1+t)^k} dt\\
&&\ll \max\{c_1^{\mathrm{Re}(s)-k}, c_2^{\mathrm{Re}(s)-k} |4\pi (n+\kappa_j)|^{1-k}\int_0^\infty \frac{e^{-4\pi t c_1 n_1}}{(1+t)^k}dt \sum_{n+\kappa_j<0} e^{2\pi (n+\kappa_j)c_1} |c_{f,j}^-(n)|,
\end{eqnarray*}
where $n_1$ is defined by
\[
n_1 :=
\begin{cases}
    1 & \text{if $\kappa_j = 0$},\\
    \kappa_j & \text{if $\kappa_j \neq0$}.
\end{cases}
\]
Therefore, $\varphi_s \in \mathcal{F}_f$, and $L_f(\varphi_s)$ is an analytic function on $s\in\mathbb{C}$ by Weierstrass theorem. 
In the same way, we see that $L_{\delta_k f}(\varphi_s)$ is an analytic function for $s\in \mathbb{C}$.

Recall that by Theorem \ref{integralrepresentation} we have
\[
L_f(\varphi_s) = \int_0^\infty f(iy) \varphi_s(y) dy
\]
and
\[
L_{\delta_k f}(\varphi) = \int_0^\infty (\delta_k f)(iy) \varphi(y) dy.
\]
By the Mellin inversion formula, we have
\[
f(iy) \varphi(y) = \frac{1}{2\pi i}\int_{c-i\infty}^{c+i\infty} L_f(\varphi_s) y^{-s} ds
\]
and
\[
(\delta_k f)(iy) \varphi(y) = \frac{1}{2\pi i} \int_{c-i\infty}^{c+i\infty} L_{\delta_k f}(\varphi_s) y^{-s}ds
\]
for all $c\in \mathbb{R}$ and $y>0$.

By integration by parts, we see that for each $1\leq j\leq m$, 
\[
|\langle L_f(\varphi_s), \mbf{e}_j\rangle | \leq \frac{1}{|s|} \int_0^\infty \left|\frac{d}{dy} (f_j(iy)\varphi(y))\right| y^{\mathrm{Re}(s)} dy.
\]
Therefore, $L_f(\varphi_s) \to 0$ as $\mathrm{Im}(s)\to \infty$. 
The corresponding fact for $L_{\delta_k f}(\varphi_s)$ can be proved in the same way.
From this, we have
\begin{eqnarray*}
f(iy) \varphi(y) &=& \frac{1}{2\pi i}\int_{c-i\infty}^{c+i\infty} L_f(\varphi_s) y^{-s}ds=\frac{1}{2\pi i} \int_{k-c-i\infty}^{k-c+i\infty} L_f(\varphi_s) y^{-s}ds\\
&=& \frac{1}{2\pi i} \int_{c-i\infty}^{c+i\infty} L_f(\varphi_{k-s}) y^{-k+s} ds,
\end{eqnarray*}
and by Theorem \ref{ftnaleqn}, we see that
\begin{equation} \label{harmonicftneqn1}
f(iy) \varphi(y) = \frac{i^k}{2\pi i} \rho(S) \int_{c-i\infty}^{c+i\infty} L_f(\varphi_{k-s}|_{2-k,\chi^{-1}} S)y^{-k+s}ds
\end{equation}
for any $y>0$.
In the same way, we have
\begin{eqnarray*}
(\delta_k f)(iy) \varphi(y) &=& \frac{1}{2\pi i} \int_{c-i\infty}^{c+i\infty} L_{\delta_k(f)}(\varphi_{k-s}) y^{-k+s}ds\\
&=& -\frac{i^k}{2\pi i} \rho(S) \int_{c-i\infty}^{c+i\infty} L_{\delta_k(f)}(\varphi_{k-s}|_{2-k, \chi^{-1}}S)y^{-k+s}ds
\end{eqnarray*}
for any $y>0$.

Since we have
\[
L_f(\varphi_{k-s}|_{2-k} S) = \int_0^\infty f(iy)(\varphi_{k-s}|_{2-k,\chi^{-1}} S)(y)dy = \int_0^\infty f(iy) \chi(S) \varphi\left( \frac 1y \right) y^{s-1} dy,
\]
the Mellin inversion formula implies that
\[
f(iy) \chi(S) \varphi\left( \frac1y \right)  = \frac{1}{2\pi i}\int_{c-i\infty}^{c+i\infty} L_f(\varphi_{k-s}|_{2-k,\chi^{-1}}S)y^{-s}dy
\]
for all $c\in \mathbb{R}$ and $y>0$.
Therefore, we have
\begin{equation} \label{harmonicftneqn2}
f\left( \frac{i}{y} \right) \chi(S) \varphi(y) = \frac{1}{2\pi i} \int_{c-i\infty}^{c+i\infty} L_f(\varphi_{k-s}|_{2-k,\chi^{-1}}S) y^{s} dy,
\end{equation}
and from (\ref{harmonicftneqn1}) and (\ref{harmonicftneqn2}), we see that 
\[
f(iy) \varphi(y) = i^k y^{-k} \rho(S) \chi(S) f\left( \frac iy \right) \varphi(y)
\]
for any $y>0$.
Since for each $y>0$, there exists $\varphi \in S_c(\mathbb{R}_+)$ such $\varphi(y) \neq 0$, we have
\begin{equation} \label{harmonicftneqn3}
 f(iy)= i^k y^{-k} \rho(S) \chi(S) f\left( \frac iy \right)
\end{equation}
for any $y>0$. 
In the same way, we see that 
\begin{eqnarray*}
L_{\delta_k f}(\varphi_{k-2}|_{2-k, \chi^{-1}}S) = \int_0^\infty (\delta_k f)(iy) \chi(S) \varphi\left( \frac 1y\right) y^{s-1}dy,    
\end{eqnarray*}
and hence we have
\begin{eqnarray*}
(\delta_k f)(iy) \left(i\frac 1y \right) \chi(S) \varphi(y) = \frac{1}{2\pi i} \int_{c-i\infty}^{c+i\infty} L_{\delta_k f}(\varphi_{k-s}|_{2-k,\chi^{-1}}S)y^s dy.
\end{eqnarray*}
Therefore, we obtain
\begin{eqnarray} \label{harmonicftneqn4}
(\delta_k f)(iy) = -i^k y^{-k} \rho(S) \chi(S) (\delta_k f)\left( i\frac 1y\right).
\end{eqnarray}

We now define 
\[
F(\tau) = f(\tau) - (f|_{k,\chi,rho}S)(\tau)
\]
for $\tau\in \mathbb{H}$.
By (\ref{harmonicftneqn3}) and (\ref{harmonicftneqn4}), we see that $F(iv) = 0$ and $\frac{\partial}{\partial u} F(iv) = 0$.
This implies that $F\equiv 0$
since $F$ is an eigenfunction of the Laplace operator.
Therefore, 
\[
f = f|_{k, \chi,\rho} S.
\]
Since $T$ and $S$ generates $\mathrm{SL}_2(\mathbb{Z})$, we see that $f$ is a harmonic weak Maass  form in $H_{k,\chi,\rho}$. 
\end{proof}

We define the operator $\xi_{2-k}$ by
\[
\xi_{2-k}:=2iv^{2-k}\overline{\frac{\partial}{\partial \overline{\tau}}}.
\]
Then, it defines a surjective map  \cite[Proposition 3.2]{BF}
\begin{align*}
\xi_{2-k}: H_{2-k,\chi,\rho} \to S_{k,\overline{\chi},\overline{\rho}}.
\end{align*}
Note that if $f\in S_{k,\overline{\chi}, \overline{rho}}$, then $f$ has a Fourier expansion of the form
\begin{equation} \label{vvmfcusp}
f(\tau) = \sum_{j=1}^m \sum_{n-\kappa_j>0} a_{f,j}(n)e^{2\pi i(n-\kappa_j)\tau}\mbf{e}_j.
\end{equation}
Let  $\mathcal{C}_c^\infty (\mathbb{R}, \mathbb{R})$ be the space of piecewise smooth, compactly supported functions on $\mathbb{R}$ with values in $\mathbb{R}$.
Then, we have the following summation formula for harmonic weak Maass forms.

\begin{thm}
Let $k\in 2\mathbb{N}$ and let $f\in S_{k, \bar{\chi}, \bar{\rho}}$ with Fourier expansion as in (\ref{vvmfcusp}).
Suppose that $g$ is an element of $H_{2-k, \chi,\rho}$ such that $\xi_{2-k}(g) = f$ with Fourier expansion
\[
g(\tau) = \sum_{j=1}^m \sum_{n\gg-\infty} c_{g,j}^+(n) e^{2\pi i(n+\kappa_j)\tau} \mbf{e}_j + \sum_{j=1}^m \sum_{n+\kappa_j<0} c_{g,j}^-(n) \Gamma(k-1, -4\pi (n+\kappa_j) v) e^{2\pi i(n+\kappa_j)\tau} \mbf{e}_j. 
\]
For every $\varphi \in \mathcal{C}_c^\infty$ and $1\leq j\leq m$, we have
\begin{eqnarray*}
&&\sum_{n\gg-\infty} c_{g,j}^+(n)\int_0^\infty \varphi(y)\left( e^{-2\pi (n+\kappa_j)y} -(-iy)^{k-2}e^{-2\pi(n+\kappa_j)/y}\right)dy   \\
&&= \sum_{l=0}^{k-2} \sum_{n-\kappa_j>0} \overline{a_{f,j}(n)}\bigg( \frac{(k-2)!}{l!} (4\pi (n-\kappa_j)^{1-k+l} \int_0^\infty e^{-2\pi(n-\kappa_j)y}y^l \varphi(y) dy \\
&&+ \frac{2^{l+1}}{(k-1)} (8\pi (n-\kappa_j))^{-\frac{k}{2}} \int_0^\infty e^{-\pi (n-\kappa_j)y}y^{\frac k2 - 1}\varphi(y) M_{1-\frac k2 + l, \frac{k-1}2}(2\pi (n-\kappa_j)y)dy \bigg),
\end{eqnarray*}
where $M_{\kappa, \mu}(z)$ denotes the Whittaker hypergeometric function (see \cite[Section 13.14]{OLBC}).
\end{thm}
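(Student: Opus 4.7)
My plan is to reduce the LHS, which involves only the holomorphic part $g^+_j$, to an integral depending on the non-holomorphic part $g^-_j$, and then use the $\xi_{2-k}$-relation between $g$ and $f$ to rewrite everything in terms of the Fourier coefficients $\overline{a_{f,j}(n)}$. The first (Laplace-transform) term on the RHS should emerge from the finite-sum expansion of the incomplete Gamma function at integer argument, while the second (Whittaker) term should arise from a confluent-hypergeometric integral representation after an appropriate change of variables.

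The first step is to interchange sum and integration on the LHS, legitimate by the compact support of $\varphi$ together with the sub-exponential growth bounds on $c^+_{g,j}(n)$, to obtain
\[
\mathrm{LHS} = \int_0^\infty \bigl[g^+_j(iy) - (-iy)^{k-2}\,g^+_j(i/y)\bigr]\,\varphi(y)\,dy.
\]
Writing $g^+ = g - g^-$ and using the modular transformation $g(i/y) = (iy)^{2-k}\chi(S)\rho(S)g(iy)$, the identity $(-iy)^{k-2}(iy)^{2-k} = 1$ (valid since $k$ is even) converts the $g$-contribution into a modular-cocycle term, after which the expression reduces (component-wise in $j$) to
\[
\mathrm{LHS} = -\int_0^\infty g^-_j(iy)\,\varphi(y)\,dy + \int_0^\infty (-iy)^{k-2}\,g^-_j(i/y)\,\varphi(y)\,dy.
\]
Applying $\xi_{2-k}$ term-by-term to the non-holomorphic part of $g_j$, a short computation yields the relation $c^-_{g,j}(n) = -\overline{a_{f,j}(-n)}/(-4\pi(n+\kappa_j))^{k-1}$ for $n+\kappa_j < 0$; substituting $m = -n$ (so $m-\kappa_j > 0$) then allows one to write the Fourier expansion of $g^-_j$ purely in terms of $\overline{a_{f,j}(m)}$ and incomplete Gamma functions $\Gamma(k-1,4\pi(m-\kappa_j)y)$.

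For the first resulting integral, one applies the finite-sum identity $\Gamma(k-1,x) = (k-2)!\,e^{-x}\sum_{l=0}^{k-2}x^l/l!$ (valid for $k\in 2\mathbb{N}$): the exponential factors combine to $e^{-2\pi(m-\kappa_j)y}$, and the sum produces exactly the Laplace-transform terms $\int_0^\infty e^{-2\pi(m-\kappa_j)y}y^l\varphi(y)\,dy$ with the prefactors $\frac{(k-2)!}{l!}(4\pi(m-\kappa_j))^{1-k+l}$. For the second integral, the approach is to combine the Kummer integral representation
\[
M(k-1-l,k,z) \;=\; \frac{(k-1)!}{(k-2-l)!\,l!}\int_0^1 e^{zu}\,u^{k-2-l}\,(1-u)^l\,du
\]
with the factorization $M_{1-k/2+l,(k-1)/2}(z) = z^{k/2}e^{-z/2}M(k-1-l,k,z)$, after a change of variables $y\mapsto 1/y$ that brings the argument of $\Gamma(k-1,\cdot)$ into the required form, and then apply Fubini. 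The main obstacle is this last identification: matching the prefactor $\frac{2^{l+1}}{k-1}(8\pi(n-\kappa_j))^{-k/2}$ and the specific Whittaker indices requires careful tracking of powers of $i$, $\pi$, and $(n-\kappa_j)$ through the Gamma-function expansion, the change of variables, and the interchange of the $u$- and $y$-integrations; a secondary subtlety is the treatment of the modular-cocycle piece in Step 2, which in the vector-valued setting must be shown to produce no extra contribution component-wise in $j$.
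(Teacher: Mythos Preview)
Your approach differs genuinely from the paper's. The paper works entirely through the $L$-series formalism: it writes $L_g(\varphi)=\int_0^\infty g^+(iy)\varphi(y)\,dy-\overline{L_f(\Phi(\varphi))}$ for an auxiliary $\Phi(\varphi)$ built via an inverse Laplace transform, applies the functional equations of \emph{both} $L_g$ and $L_f$, and then evaluates the resulting Laplace transforms using Bessel-function identities from tables (\cite[4.1(25)]{EMOT} to pass to Bessel integrals, then \cite[(8.4.8)]{EMOT} to convert these to Whittaker $M$-functions). Your route is more hands-on: you split $g^+=g-g^-$ directly, invoke only the modularity of $g$ (never the functional equation of $L_f$), and plan to reach the Whittaker term via the Kummer integral representation rather than the Bessel machinery. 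This is a legitimate alternative and avoids the transform tables, at the cost of more explicit bookkeeping of constants.

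The concern you label a ``secondary subtlety'' is real and should be promoted to the primary issue. After your splitting, the $g$-piece equals
\[
g_j(iy)-(-iy)^{k-2}g_j(i/y)\;=\;g_j(iy)-\chi(S)\bigl[\rho(S)\,g(iy)\bigr]_j,
\]
which does \emph{not} vanish componentwise in the vector-valued setting, so your reduction to the two $g^-$-integrals does not go through as written. In fact the paper's own computation, once the slash $\varphi|_{k,\chi^{-1}}S$ is unwound and the integration variable changed, also carries a factor $\chi(S)\rho(S)$ on the second LHS term; the theorem as printed appears to have suppressed these factors (it is the direct vector-valued analogue of the scalar summation formula of \cite{DLRR}). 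Your argument, with the Kummer step completed, would establish the identity with $\chi(S)\rho(S)$ acting on the $(-iy)^{k-2}$ term of the LHS; neither your approach nor the paper's recovers the literal componentwise statement without that factor.
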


\begin{proof}
Note that $\mathcal{C}_c^\infty(\mathbb{R}, \mathbb{R}) \subset \mathcal{F}_f \cap \mathcal{F}_g$.
Since $a_{f,j}(n) = -\overline{c_{g,j}^-(n)}(4\pi (n-\kappa_j))^{k-1}$ for $n-\kappa_j>0$, by Theorem \ref{integralrepresentation}, we have
\begin{eqnarray*}
L_g(\varphi) &=& \int_0^\infty g^+(iy)\varphi(y)dy  - \overline{L_f(\Phi(\varphi))},
\end{eqnarray*}
where $\Phi(\varphi)$ is defined by
\[
\Phi(\varphi):= \mathcal{L}^{-1}\left( (2u)^{1-k}\int_0^\infty \Gamma(k-1, 2uy)e^{uy}\varphi(y)dy \right).
\]

Recall that $L_g(\varphi)$ satisfies a functional equation
\[
L_g(\varphi) = i^{2-k} \rho(S) L_g(\varphi|_{k,\chi^{-1}}S).
\]
Therefore, we have
\[
\int_0^\infty g^+(iy)\varphi(y)dy  - i^{2-k}\rho(S)  \int_0^\infty g^+(iy) (\varphi|_{k, \chi^{-1}}S)(y)dy = \overline{L_f(\Phi(\varphi))} -i^{2-k}\rho(S) \overline{L_f(\Phi(\varphi|_{k,\chi^{-1}}S)}).
\]
Note that by the functional equation of $L_f(\varphi)$, we have
\begin{eqnarray*}
i^{2-k}\rho(S) \overline{L_f(\Phi(\varphi|_{k,\chi^{-1}}S))} &=& i^{2-k}\rho(S) \overline{i^k \overline{\rho(S)} L_f(\Phi(\varphi|_{k,\chi^{-1}}S)|_{2-k, \chi}S)}\\
&=& -\rho(-I) \overline{ L_f(\Phi(\varphi|_{k,\chi^{-1}}S)|_{2-k, \chi}S)}.
\end{eqnarray*}

By \cite[4.1 (25)]{EMOT}, we have
\begin{eqnarray*}
\mathcal{L}\left(u^{v-1} f\left( \frac 1u\right)\right)(x) = x^{-\frac12 v}\int_0^\infty u^{\frac12 v}J_v\left(2u^{\frac12}x^{\frac12}\right) \mathcal{L}(f)(u)du
\end{eqnarray*}
for $\mathrm{Re}(v)>-1$, where $J_v(z)$ denotes the Bessel function defined in \cite[Section 10.2.2]{OLBC}
Therefore, we see that
\begin{eqnarray} \label{sumformula1}
\nonumber &&\mathcal{L}(\Phi(\varphi|_{k,\chi^{-1}}S)|_{2-k, \chi^{-1}} S)(2\pi (n-\kappa_j))\\
\nonumber &&= \mathcal{L}\left(\chi^{-1}(S) x^{k-2} \mathcal{L}^{-1}\left((2u)^{1-k} \int_0^\infty \Gamma(k-1, 2uy) e^{uy}\varphi\left( \frac 1y \right) \chi(S) y^{-k} dy \right) \left( \frac 1x \right)\right)(2\pi (n-\kappa_j)\\
\nonumber &&= (2\pi (n-\kappa_j))^{-\frac12(k-1)} \chi^{-1}(S) \int_0^\infty u^{\frac12 (k-1)} J_{k-1}\left(2u^{\frac12}(2\pi (n-\kappa_j))^{\frac12}\right)\\
&&\qquad \times (2u)^{1-k}\int_0^\infty \Gamma(k-1, 2uy) e^{uy}\varphi\left(\frac 1y \right) \chi(S) y^{-k}dydu\\
\nonumber &&= (2\pi (n-\kappa_j))^{-\frac12(k-1)} \chi^{-1}(S) \int_0^\infty u^{\frac12 (k-1)} J_{k-1}\left(2u^{\frac12}(2\pi (n-\kappa_j))^{\frac12}\right)\\
\nonumber &&\qquad \times (2u)^{1-k}\int_0^\infty \Gamma(k-1, 2u/y) e^{u/y}\varphi\left( y \right) \chi(S) y^{k-2}dydu\\
\nonumber &&= (8\pi (n-\kappa_j))^{\frac 12 (1-k)} \int_0^\infty \varphi(y)y^{k-2} \int_0^\infty u^{\frac12 (1-k)} J_{k-1}\left(\sqrt{8\pi (n-\kappa_j) u}\right) \Gamma(k-1, 2u/y)e^{u/y} dudy.
\end{eqnarray}
Note that by \cite[(8.4.8)]{OLBC} we have
\[
\Gamma(n+1, z) = n! e^{-z} \sum_{l=0}^n \frac{z^l}{l!}.
\]
Therefore, (\ref{sumformula1}) is equal to 
\begin{eqnarray*}
&&(8\pi (n-\kappa_j))^{\frac12 (1-k)} (k-2)!\sum_{l=0}^{k-2} \frac{2^l}{l!} \int_0^\infty \varphi(y)y^{k-2-l} \int_0^\infty u^{\frac12 (1-k)+l} J_{k-1}\left(\sqrt{8\pi (n-\kappa_j) u}\right) e^{-u/y} dudy\\
&& = (8\pi (n-\kappa_j))^{\frac12 (1-k)} (k-2)!\\
&&\qquad \times \sum_{l=0}^{k-2} \frac{2^{l+1}}{l!} \int_0^\infty \varphi(y)y^{k-2-l} \int_0^\infty u^{2-k+2l} J_{k-1}\left(\sqrt{8\pi (n-\kappa_j) } u \right) e^{-u^2/y} dudy
\end{eqnarray*}
By \cite[(8.4.8)]{EMOT}, we have
\[
\int_0^\infty e^{-\beta^2 x^2} J_v(ax) x^{s-1}dx = \frac{\Gamma\left(\frac 12v + \frac12 s\right)}{a\beta^{s-1}\Gamma(v+1)} e^{-\frac{a^2}{8\beta^2}} M_{\frac12 s-\frac12, \frac12 v}\left(\frac{a^2}{4\beta^2} \right)
\]
for $|\mathrm{arg} \beta|<\frac{\pi}{4}$ and $\mathrm{Re}(s) > -\mathrm{Re}(v)$.
Therefore, (\ref{sumformula1}) is equal to 
\begin{eqnarray*}
\frac{(8\pi (n-\kappa_j))^{\frac {-k}{2}}}{k-1} \sum_{l=0}^{k-2} 2^{l+1} \int_0^\infty \varphi(y)y^{\frac k2-1} e^{-\pi (n-\kappa_j)y} M_{1+l-\frac k2, \frac12 (k-1)}(2\pi (n-\kappa_j)y) dy.
\end{eqnarray*}
We can also see that 
\begin{eqnarray*}
\mathcal{L}(\Phi(\varphi))(2\pi(n-\kappa_j)) &=& (4\pi(n-\kappa_j))^{1-k} \int_0^\infty \Gamma(k-1, 4\pi(n-\kappa_j)y)e^{2\pi(n-\kappa_j)y} \varphi(y)dy\\
&=& \sum_{l=0}^{k-2} \frac{(k-2)!}{l!} (4\pi (n-\kappa_j))^{1-k+l} \int_0^\infty e^{-2\pi(n+\kappa_j)y} y^l \varphi(y) dy.
\end{eqnarray*}
\end{proof}

\section{$L$-series of harmonic Maass  Jacobi forms}
In this section, we consider the case of Jacobi forms and prove a converse theorem as well. We review basic notions of Jacobi forms (for more details, see  \cite[Section 3.1]{CL2} and \cite[Section 5]{EZ}).
\par
Let $k$ be a positive even integer and $m$ be a positive integer. 
From now on, we use the notation $\tau = u+iv\in\HH$ and $z = x+iy\in\CC$.

Let $F$ be a complex-valued function on $\mathbb{H}\times \mathbb{C}$.
For $\gamma=\sm a&b\\c&d\esm \in\mathrm{SL}_2(\mathbb{Z}) , X = (\lambda, \mu)\in \mathbb{Z}^2$, we define
\[(F|_{k,m} \gamma)(\tau,z) := (c\tau+d)^{-k}e^{-2\pi im\frac{cz^2}{c\tau+d}}F(\gamma(\tau,z))\]
and
\[(F|_m X)(\tau,z) :=e^{2\pi i m (\lambda^2 \tau + 2\lambda z)}F(\tau,z+\lambda\tau+\mu),\]
where $\gamma(\tau,z) = (\frac{a\tau+b}{c\tau+d},\frac{z}{c\tau+d})$.

We now define a Jacobi form.

\begin{dfn}
A weakly holomorphic Jacobi form of weight $k$ and index $m$ on $\mathrm{SL}_2(\mathbb{Z})$  is a holomorphic function $F$ on $\mathbb{H}\times\mathbb{C}$ satisfying
\begin{enumerate}
\item[(1)] $F|_{k,m} \gamma =F$ for every $\gamma\in\mathrm{SL}_2(\mathbb{Z})$,
\item[(2)] $F|_m X = F$ for every $X\in \mathbb{Z}^2$,
\item[(3)] $F$ has the Fourier expansion of the form
\begin{equation} \label{Jacobifourier}
F(\tau,z) =
\sum_{\substack{l, r\in\mathbb{Z}\\ 4ml - r^2 \gg -\infty}}a_F(l,r)e^{2\pi il\tau}e^{2\pi irz}.
\end{equation}
\end{enumerate}
\end{dfn}

We denote by $J^!_{k,m}$ the space of all weakly holomorphic Jacobi forms of weight $k$ and index $m$ on $\mathrm{SL}_2(\mathbb{Z})$. 
If a Jacobi form satisfies the condition $a(l,r)\neq 0$ only if $4ml - r^2\geq0$ (resp. $4ml-r^2>0$), then it is called a Jacobi form (resp. Jacobi cusp form). 
We denote by $J_{k,m}$ (resp. $S_{k,m}$) the space of all Jacobi forms (resp., Jacobi cusp forms) of weight $k$ and index $m$ on $\mathrm{SL}_2(\mathbb{Z})$.

We now recall some definitions and facts about harmonic Maass-Jacobi forms, which were interested in \cite{BRR} and \cite{BR}.
The Casimir operators $C_{k,m}$ is defined  by
\begin{multline*}
    \mathcal{C}_{k,m}:=-2(\tau-\overline{\tau})^2\partial_{\tau \overline{\tau}}-(2k-1)(\tau-\overline{\tau})\partial_{\overline{\tau}}+\frac{(\tau-\overline{\tau})^2}{4\pi i m}\partial_{\tau z z}+\frac{k(\tau-\overline{\tau})}{4\pi i m}\partial_{z \overline{z}}\\
    +\frac{(\tau-\overline{\tau})(z-\overline{z})}{4\pi i m}\partial_{zz\overline{z}}-2(\tau-\overline{\tau})(z-\overline{z})\partial_{\tau \overline{z}}+(1-k)(z-\overline{z})\partial_{\overline{z}}\\
    +\frac{(\tau-\overline{\tau})^2}{4\pi i m}\partial_{\tau \overline{z} \overline{z}}+\left(\frac{(z-\overline{z})^2}{2}+\frac{k(\tau-\overline{\tau})}{4\pi i m} \right)\partial_{\overline{z}\overline{z}}+\frac{(\tau-\overline{\tau})(z-\overline{z}) }{4\pi i m}\partial_{z\overline{z}\overline{z}}.
\end{multline*}

\begin{dfn}\label{hmj} 
Let $F: \mathbb{H}\times \mathbb{C}\to \mathbb{C}$ be a function that is real-analytic in $\tau\in\mathbb{H}$ and holomorphic in $z\in\mathbb{C}$.
Then,  $F$ is called a harmonic Maass-Jacobi form 
of weight $k$ and index $m$ on $\mathrm{SL}_2(\mathbb{Z})$ if the following conditions are satisfied:
\begin{enumerate}
\item $F|_{k,m} \gamma =F$ for every $\gamma \in \mathrm{SL}_2(\mathbb{Z})$.
\item $F|_m X = F$ for every $X\in\mathbb{Z}^2$
\item $\mathcal{C}_{k,m}(\phi)=0$.
\item There is a function 
\[
P_{F}(\tau,z) =\sum_{l,r\in\mathbb{Z}\atop 4ml-r^2 \leq 0} c^+_{F}(4ml-r^2) e^{2\pi il\tau} e^{2\pi irz}
\] 
such that there are only finitely many $4ml-r^2\leq 0$ such that $c^+_{F}(4ml-r^2)\neq 0$ and 
$F(\tau,z)-P_{F}(\tau,z)=O(e^{-hv}e^{2\pi my^2/v})$ as $v\rightarrow\infty$ for some $h>0$.
\end{enumerate}
\end{dfn}

We use $\hat{J}_{k,m}$ to denote the space of such forms. 
Any $F\in \hat{J}_{k,m}$ has a Fourier expansion  of the form
\begin{align}\label{JacobiFourier}
F(\tau,z) 
\nonumber&=\sum_{l,r \in\mathbb{Z}\atop 4ml-r^2\gg -\infty} c^+_{F}(l,r)e^{2\pi il\tau} e^{2\pi irz}+\sum_{l,r \in\mathbb{Z}\atop 4ml-r^2>0} c^-_{F}(l,r)\Gamma\left(1-k, \frac{(4ml-r^2)\pi v}{m}\right)e^{2\pi il\tau} e^{2\pi irz}.
\end{align}

A harmonic Maass-Jacobi form has the theta decomposition (see \cite[Section 5]{BRR}, \cite[Section 6]{BR}, and \cite[Section 5]{EZ})
\begin{equation} \label{thetadecomp}
F(\tau,z)=\sum_{j=1}^{2m}F_j(\tau)\theta_{m,j}(\tau,z),
\end{equation}
where $\theta_{m,j}$ is defined by
\[
\theta_{m,j}(\tau,z):=\sum_{r\equiv j \pmod{2m}} e^{\pi ir^2/(2m)} e^{2\pi irz}
\]
and  $\chi_\eta$ is the eta-multiplier system defined by 
\[
\chi_\eta(\gamma) :=  \frac{\eta(\gamma\tau)}{\sqrt{c\tau+d}\ \eta(\tau)}
\]
for $\gamma = \sm a&b\\c&d\esm \in \mathrm{SL}_2(\mathbb{Z})$.
Moreover, $\sum_{j=1}^{2m} F_j\mbf{e}_j$ is a vector-valued harmonic Maass form. To explain this, we recall the definition of metaplectic groups.
The metaplectic group $\mathrm{Mp}_2(\mathbb{R})$ 
consists of pairs $(g,\omega(\tau))$, where $g=\sm a&b\\c&d\esm \in \mathrm{SL}_2(\mathbb{R})$ and $\omega: \mathbb{H}\to \mathbb{C}$ is a holomorphic function satisfying $\omega(\tau)^2= c\tau+d$, with group law 
\[
(g,\omega(\tau))(g',\omega'(\tau)):=(gg', (\omega\circ g')(\tau)\omega'(\tau)).
\]
We use $\mathrm{Mp}_2(\mathbb{Z})$ to denote the inverse image of $\mathrm{SL}_2(\mathbb{Z})$ in $\mathrm{Mp}_2(\mathbb{R})$.
Let $m$ be a positive integer. We also recall the Weil representation $\rho_m: \mathrm{Mp}_2(\mathbb{Z})\to \GL_{2m}(\mathbb{C})$ given by 
\begin{align*}
&\rho_m(T)\mbf{e}_l:=e_{4m}(l^2)\mbf{e}_l,\\
&\rho_m(S)\mbf{e}_l:=\frac{1}{\sqrt{2im}}\sum_{l'=1}^{2m}e_{2m}(-ll')\mbf{e}_{l'},
\end{align*}
where we use the notation $e_{m}(w):=e^{\frac{2\pi i w}{m}}$.
Let $T:=\big(\sm 1&1\\0&1\esm ,1\big)$ and $S:=\big(\sm 0&{-1}\\1&0\esm, \sqrt{\tau}\big)$ be two generators of $\mathrm{Mp}_2(\mathbb{Z})$.
Throughout this paper, we use the convention that $\sqrt{\tau}$ is chosen so that 
  $\arg(\sqrt{\tau})\in (-\pi/2, \pi/2]$.
  The map
\[\sm a&b\\c&d\esm  \mapsto \widetilde{\sm a&b\\c&d\esm} = (\sm a&b\\c&d\esm, \sqrt{c\tau+d})\]
defines a locally isomorphic embedding of $\mathrm{SL}_2(\mathbb{R})$ into $\mathrm{Mp}_2(\mathbb{R})$. 
We then define a representation $\rho'_{m} : \mathrm{SL}_2(\mathbb{Z}) \to \mathrm{GL}_{2m}(\mathbb{C})$ by
\[\rho_{m}'(\gamma) := \rho_m(\widetilde{\gamma})\chi_\eta(\gamma)\]
for $\gamma\in\mathrm{SL}_2(\mathbb{Z})$.
It is known (\cite{CL2}, p.281) that $\rho_m'$ is unitary representation of $\mathrm{SL}_2(\mathbb{Z})$.
The theta decomposition induces an isomorphism $\phi_{k,m}$ between $H_{k-\frac12,\overline{\rho_m},\overline{\chi_\eta}}$ and $\hat{J}_{k,m}$
(see \cite[Section 5]{BRR} and \cite{CC}): 
\[
F \mapsto \sum_{j=1}^{2m} F_j(\tau)\theta_{m,j}(\tau,z).
\]

Let $F$ be a Jacobi cusp form $F\in S_{k,m}$ with its Fourier expansion
\[
F(\tau,z) =
\sum_{\substack{l, r\in\mathbb{Z}\\ 4ml - r^2 >0}}a_F(l,r)e^{2\pi il\tau}e^{2\pi irz}.
\]
Then, $F_j$ has the Fourier expansion
\[
F_j(\tau) =\sum_{\substack{n>0\\ n+j^2 \equiv 0 \pmod{4m}}} a_F\left(\frac{n+j^2}{4m}, j\right)e^{2\pi in\tau/(4m)}.
\]
We define the partial $L$-series of $F$ by
\[
L(F,j,s) := \sum_{\substack{n>0\\  n+j^2 \equiv 0 \pmod{4m}}} \frac{a_F\left(\frac{n+j^2}{4m}, j\right)} {\left(\frac{n}{4m}\right)^{s}}
\]
for $1\leq j\leq 2m$.
This $L$-series was studied in \cite{Ber, CL2, LR, LR2}.

We now consider $L$-series of harmonic Maass Jacobi forms.
Let $F\in \hat{J}_{k,m}$.
Then, it has the theta decomposition as in (\ref{thetadecomp}), and $F_j$ has the Fourier expansion
\begin{eqnarray*}
F_j(\tau) &=& \sum_{n\gg-\infty \atop n+j^2 \equiv 0\pmod{4m}} c^+_{F}\left(\frac{n+j^2}{4m}, j\right) e^{2\pi in\tau/(4m)}\\
&&+ \sum_{n<0\atop n+j^2 \equiv 0 \pmod{4m}} c^-_F\left(\frac{n+j^2}{4m}, j\right) \Gamma\left(\frac32-k, -\frac{-\pi nv}{m}\right) e^{2\pi in\tau/(4m)}.
\end{eqnarray*}
Let $n_0\in\mathbb{N}$ be such that $F_j(\tau)$ are $O(e^{2\pi n_0 v})$ as $v=\mathrm{Im}(\tau)\to\infty$ for each $1\leq j\leq m$.
Let $\mathcal{F}_F$ be the space of functions $\varphi\in \mathcal{C}(\mathbb{R}, \mathbb{C})$ such that 
\begin{enumerate}
\item $(\mathcal{L} \varphi)(s)$ converges absolutely for all $s$ with $\mathrm{Re}(s) \geq -2\pi n_0$,

\item $(\mathcal \varphi_{\frac52-k})(s)$ converges absolutely for all $s$ with $\mathrm{Re}(s)>0$,

\item for each $1\leq j\leq m$, the series
\begin{eqnarray*}
&& \sum_{n\gg-\infty \atop n+j^2 \equiv 0 \pmod{4m}} \left|c^+_{F}\left(\frac{n+j^2}{4m}, j\right) \right| (\mathcal{L} |\varphi|)(\pi n/(2m))\\
&& + \sum_{n<0\atop n+j^2 \equiv 0 \pmod{4m}} \left|c^-_{F}\left(\frac{n+j^2}{4m}, j\right) \right| (\pi |n+\kappa_j|/m)^{\frac32-k} \int_0^\infty \frac{(\mathcal{L}|\varphi_{\frac52-k}|)(-\pi n(2t+1)/(2m))}{(1+t)^{k-\frac12}} dt
\end{eqnarray*}
converges.
\end{enumerate}
For $\varphi\in \mathcal{F}_F$, we define the $L$-series of $F$ by
\begin{eqnarray*}
L_F(\varphi) &:=& \sum_{j=1}^{2m} \sum_{n\gg-\infty\atop n+j^2\equiv 0 \pmod{4m}} c^+_{F}\left(\frac{n+j^2}{4m}, j\right)  (\mathcal{L}\varphi) (\pi (n+\kappa_j)/(2m))\ \mbf{e}_j\\
&& + \sum_{j=1}^{2m} \sum_{n<0\atop n+j^2 \equiv 0 \pmod{4m}} c^-_{F}\left(\frac{n+j^2}{4m}, j\right) (-\pi (n+\kappa_j)/m)^{\frac32-k} \\
&& \qquad \qquad \times \int_0^\infty \frac{(\mathcal{L}\varphi_{\frac52-k})(-\pi n(2t+1)/(2m))}{(1+t)^{k-\frac12}} dt\ \mbf{e}_j.
\end{eqnarray*}

Let $L_m$ be the heat operator defined by
\[
L_{m}:=\frac{2m}{\pi i}\frac{\partial}{\partial \tau}-\frac{1}{(2\pi i)^2}\frac{\partial^2}{\partial z^2}.
\]
We define a differential operator $\alpha_k$ by
\[
(\alpha_k F)(\tau,z) L= \tau \left(\frac{\partial F}{\partial \bar{\tau}} + \frac{\pi i}{2m}L_m(F)\right)(\tau,z) + \frac{2k-1}{4}F(\tau,z).
\]
Then, we prove that the corresponding vector-valued function for $\alpha_k F$ is the image of the corresponding vector-valued harmonic weak Maass form for $F$ under the operator $\delta_{k-\frac12}$.

\begin{lem}
If $F\in \hat{J}_{k,m}$, then we have
\[
\phi_{k,m}\left( \alpha_k F \right) = \delta_{k-\frac12} \left(\phi_{k,m}(F)\right).
\]
\end{lem}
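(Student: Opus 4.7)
The plan is to apply $\alpha_k$ term-by-term to the theta decomposition $F = \sum_{j=1}^{2m} F_j(\tau)\theta_{m,j}(\tau,z)$ and show that each component reduces to $\delta_{k-\frac12} F_j$. Two facts about the theta series $\theta_{m,j}$ drive the argument: they are holomorphic on $\mathbb{H}\times\mathbb{C}$, and they lie in the kernel of the heat operator $L_m$. The first is immediate from the definition. For the second, I would differentiate the Fourier series $\theta_{m,j}(\tau,z)=\sum_{r\equiv j(2m)} e^{\pi i r^2\tau/(2m)}e^{2\pi i rz}$ term-by-term: both $\frac{2m}{\pi i}\partial_\tau$ and $\frac{1}{(2\pi i)^2}\partial_z^2$ pull out a factor of $r^2$ on each summand, so they cancel and $L_m(\theta_{m,j})=0$.

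With these in hand, I first compute $L_m(F)$. Since $F_j$ depends only on $\tau$ and $\theta_{m,j}$ depends on both variables, the product rule and $L_m(\theta_{m,j})=0$ give
\[
L_m(F_j\theta_{m,j})=\frac{2m}{\pi i}F_j'(\tau)\theta_{m,j}(\tau,z)+F_j\,L_m(\theta_{m,j})=\frac{2m}{\pi i}F_j'(\tau)\theta_{m,j}(\tau,z),
\]
so $\frac{\pi i}{2m}L_m(F)=\sum_j F_j'(\tau)\theta_{m,j}(\tau,z)$. Next, because $\partial_{\bar\tau}\theta_{m,j}=0$, I obtain $\partial_{\bar\tau} F=\sum_j (\partial_{\bar\tau}F_j)\theta_{m,j}$. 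Adding the two expressions and invoking the identity $\partial_u=\partial_\tau+\partial_{\bar\tau}$ (which follows from $\tau=u+iv$) yields
\[
\frac{\partial F}{\partial\bar\tau}+\frac{\pi i}{2m}L_m(F)=\sum_{j=1}^{2m}\frac{\partial F_j}{\partial u}(\tau)\,\theta_{m,j}(\tau,z).
\]

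Multiplying by $\tau$ and adding $\tfrac{2k-1}{4}F=\sum_j \tfrac{2k-1}{4}F_j\,\theta_{m,j}$ produces
\[
(\alpha_k F)(\tau,z)=\sum_{j=1}^{2m}\left(\tau\frac{\partial F_j}{\partial u}(\tau)+\frac{2k-1}{4}F_j(\tau)\right)\theta_{m,j}(\tau,z).
\]
The coefficient of $\theta_{m,j}$ is precisely $(\delta_{k-\frac12}F_j)(\tau)$ since the weight here is $k-\tfrac12$ and $\tfrac{k-1/2}{2}=\tfrac{2k-1}{4}$. Because the theta decomposition identifies $\phi_{k,m}^{-1}$ with extracting the components in the basis $\{\theta_{m,j}\}_{j=1}^{2m}$ and $\delta_{k-\frac12}$ acts componentwise on vector-valued forms, this gives $\phi_{k,m}(\alpha_k F)=\delta_{k-\frac12}(\phi_{k,m}(F))$.

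There is essentially no obstacle of substance here; the only technical point to be careful about is the bookkeeping between the differential operators $\partial_\tau$, $\partial_{\bar\tau}$, $\partial_u$ and the chain rule when introducing the factor $\tau$ outside the derivative, which must be done after rewriting $\partial_{\bar\tau}+\frac{\pi i}{2m}L_m$ as $\partial_u$ on each component. The key input that makes the proof go through so cleanly is that $\theta_{m,j}$ is simultaneously holomorphic and $L_m$-harmonic, which is exactly what is needed to transfer the Bruinier--Funke-type lowering/raising structure from Jacobi forms to vector-valued modular forms via the theta decomposition.
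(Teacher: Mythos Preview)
Your proof is correct and follows essentially the same approach as the paper: both use the theta decomposition together with the two key facts $L_m(\theta_{m,j})=0$ and $\partial_{\bar\tau}\theta_{m,j}=0$ to show that the $j$-th theta component of $\alpha_k F$ is $\delta_{k-\frac12}F_j$. Your version is slightly more explicit in carrying out the product-rule computation and in spelling out the identity $\partial_\tau+\partial_{\bar\tau}=\partial_u$, whereas the paper first argues abstractly that $\alpha_k F$ admits a theta expansion (via $(\alpha_k F)|_m X=\alpha_k F$ and holomorphicity in $z$) and then records the same two vanishing facts; the substance is the same.
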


\begin{proof}
Suppose that $F$ is in $\hat{J}_{k,m}$.
Then, it has the theta decomposition
\[
F(\tau,z) = \sum_{j=1}^{2m} F_j(\tau) \theta_{m,j}(\tau,z).
\]
Moreover, $(\alpha_k F)|_{m} X = \alpha_k F$ for all $X\in \mathbb{Z}^2$ and $\alpha_k F$ is holomorphic in $z\in\mathbb{C}$.
Therefore, $\alpha_k F$ also has the theta expansion
\[
(\alpha_k F)(\tau,z) = \sum_{j=1}^{2m} (\alpha_k F)_j (\tau) \theta_{m,j}(\tau,z).
 \]
Note that 
\[
L_m(\theta_{m,j}) = 0,\ \frac{\partial}{\partial \bar{\tau}} (\theta_{m,j}) = 0
\]
for every $1\leq j\leq 2m$.
Therefore, we have
\[
(\alpha_k F)_j = \delta_{k-\frac12}(F_j)
\]
for each $1\leq j\leq 2m$.
\end{proof}

Suppose that $F\in \hat{J}_{k,m}$.
Let $\mathcal{F}_{\alpha_k F}$ be the space of functions $\varphi\in C(\mathbb{R}, \mathbb{C})$ such that 
\begin{enumerate}
\item $(\mathcal{L} \varphi)(s)$ converges absolutely for all $s$ with $\mathrm{Re}(s) \geq -2\pi n_0$,

\item $(\mathcal \varphi_{\frac72-k})(s)$ converges absolutely for all $s$ with $\mathrm{Re}(s)>0$,

\item
for each $1\leq j\leq m$ the series
\begin{eqnarray*}
&& \sum_{n\gg-\infty\atop n+j^2 \equiv 0 \pmod{4m}} \left|c^+_{F}\left(\frac{n+j^2}{4m}, j\right)(n/(4m)) \right| (\mathcal{L} |\varphi_2|)(\pi n/(2m))\\
&& + \sum_{n+\kappa_j<0 \atop n+j^2\equiv 0 \pmod{4m}} \left|c^-_{F}\left(\frac{n+j^2}{4m}, j\right)(n/(4m))\right| (\pi |n|/m)^{\frac32-k} \int_0^\infty \frac{(\mathcal{L}|\varphi_{\frac72-k}|)(-\pi n(2t+1)/(2m))}{(1+t)^{k-\frac12}} dt
\end{eqnarray*} 
converges.
\end{enumerate}
For $\varphi\in \mathcal{F}_{\alpha_k F}$, we define $L_{\alpha_k F}(\varphi)$ by
\begin{eqnarray*}
L_{\alpha_k F}(\varphi) &:=& \frac{2k-1}{4}L_F(\varphi) -2\pi  \sum_{j=1}^{2m} \sum_{n\gg-\infty\atop n+j^2\equiv 0 \pmod{4m}} c^+_{f,j}(n) (n/2m) (\mathcal{L}\varphi_2) (\pi n/(2m))\ \mbf{e}_j\\
&&\qquad  -2\pi \sum_{j=1}^{2m} \sum_{n<0\atop n+j^2\equiv 0 \pmod{4m}} c^-_{F}\left(\frac{n+j^2}{4m}, j\right)(n/(4m))(-\pi n/m)^{\frac32-k}\\
&&\qquad \qquad \times \int_0^\infty \frac{(\mathcal{L}\varphi_{\frac72-k})(-\pi n(2t+1)/(2m))}{(1+t)^{k-\frac12}} dt\ \mbf{e}_j.
\end{eqnarray*}

We prove the converse theorem in the case of harmonic  Maass Jacobi forms using a similar argument as in the proof of Theorem \ref{converse}.

\begin{thm} \label{Jacobiconverse}
Let $(c^+_{F}(l,r))_{4ml-r^2\geq-D_0}$ and $(c^-_{F}(l,r))_{4ml-r^2<0}$ be sequences of complex numbers such that $c^+_F(l,r), c^-_{F}(l,r) = O(e^{C\sqrt{|4ml-r^2|}})$ as $|4ml-r^2|\to\infty$, for some $C>0$. 
For each $\tau\in\mathbb{H}$ and $z\in\mathbb{C}$, set
\[
F(\tau,z) := \sum_{l,r \in\mathbb{Z}\atop 4ml-r^2\gg -\infty} c^+_{F}(l,r)e^{2\pi il\tau} e^{2\pi irz}+\sum_{l,r \in\mathbb{Z}\atop 4ml-r^2>0} c^-_{F}(l,r)\Gamma\left(1-k, \frac{(4ml-r^2)\pi v}{m}\right)e^{2\pi il\tau} e^{2\pi irz}.
\]
Suppose that for each $\varphi\in S_c(\mathbb{R}_+)$, the functions $L_F(\varphi)$ and $L_{\alpha_k F}(\varphi)$ satisfy 
\[
L_F(\varphi) = i^{k-\frac12} \overline{\rho_m}(S) L_f(\varphi|_{2-k+\frac12, \chi_\eta} S)
\]
and
\[
L_{\alpha_k F}(\varphi) = -i^{k-\frac12} \overline{\rho_m}(S) L_{\alpha_k f}(\varphi|_{2-k+\frac12, \chi_\eta}S).
\]
Then, $F$ is a harmonic Maass Jacobi form in $\hat{J}_{k, m}$.
\end{thm}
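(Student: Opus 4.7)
The plan is to reduce this Jacobi converse theorem to the vector-valued harmonic weak Maass form converse theorem (Theorem \ref{converse}) via the theta decomposition machinery already developed in the paper.

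First, I would form the candidate vector-valued function
\[
\widetilde{F}(\tau) := \sum_{j=1}^{2m} F_j(\tau)\,\mbf{e}_j,
\]
where the $F_j$ are extracted from the formal theta expansion of $F$. Since $F$ is holomorphic in $z$ and satisfies the elliptic transformation $F|_m X = F$ (which forces the theta decomposition pattern on any such function with the stated $z$-periodicity; this must be checked directly from the Fourier expansion), the $F_j$ have the Fourier expansion recorded in the paper with coefficients $c_F^\pm((n+j^2)/(4m),j)$. The growth hypothesis $c_F^\pm(l,r) = O(e^{C\sqrt{|4ml-r^2|}})$ translates exactly to the hypothesis $c_{\widetilde F,j}^\pm(n) = O(e^{C'\sqrt{|n|}})$ needed to apply Theorem \ref{converse} to $\widetilde F$ at weight $k-\tfrac12$, multiplier $\overline{\chi_\eta}$, and type $\overline{\rho_m}$.

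Second, I would match the $L$-series. By inspection of the definitions, the $j$-th component of $L_F(\varphi)$ is precisely the $j$-th component of the vector-valued $L$-series $L_{\widetilde F}(\varphi)$ (the factor $\pi n/(2m)$ corresponds to $2\pi(n+\kappa_j)$ with $\kappa_j$ replaced by the shift coming from the theta decomposition). Using the lemma just established, $\alpha_k F$ corresponds under $\phi_{k,m}$ to $\delta_{k-\frac12}\widetilde F$, so likewise $L_{\alpha_k F}(\varphi) = L_{\delta_{k-\frac12}\widetilde F}(\varphi)$. The two functional equations assumed in the hypothesis of this theorem then become
\[
L_{\widetilde F}(\varphi) = i^{k-\frac12}\overline{\rho_m}(S)\, L_{\widetilde F}(\varphi|_{2-(k-\frac12),\,\chi_\eta}S),\qquad
L_{\delta_{k-\frac12}\widetilde F}(\varphi) = -i^{k-\frac12}\overline{\rho_m}(S)\, L_{\delta_{k-\frac12}\widetilde F}(\varphi|_{2-(k-\frac12),\,\chi_\eta}S),
\]
which are exactly the hypotheses of Theorem \ref{converse} applied at weight $k-\tfrac12$ with multiplier $\overline{\chi_\eta}$ and representation $\overline{\rho_m}$.

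Third, Theorem \ref{converse} gives $\widetilde F \in H_{k-\frac12,\,\overline{\chi_\eta},\,\overline{\rho_m}}$. Finally, I would invoke the isomorphism $\phi_{k,m}:H_{k-\frac12,\,\overline{\chi_\eta},\,\overline{\rho_m}} \to \hat J_{k,m}$ recorded above: since $\phi_{k,m}(\widetilde F) = F$ by construction, $F \in \hat J_{k,m}$, completing the proof.

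The main obstacle, and the only place where any real bookkeeping is needed, is verifying the alignment of the $L$-series in the second step: one must carefully track the factor $\pi n/(2m)$ versus $2\pi(n+\kappa_j)$, the exponent $\tfrac32-k$ versus $1-(k-\tfrac12)$, and confirm that the slash operator $|_{2-k+\frac12,\chi_\eta}S$ for $\varphi$ used in the Jacobi functional equations is literally the slash operator needed at weight $k-\tfrac12$ with multiplier $\overline{\chi_\eta}$ in the vector-valued setting. Once these matchings are confirmed, the rest of the argument is a direct appeal to Theorem \ref{converse} combined with the theta-decomposition isomorphism.
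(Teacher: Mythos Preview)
Your proposal is correct and matches the paper's intended approach. The paper does not give an explicit proof of Theorem~\ref{Jacobiconverse}; it only states that the result follows ``using a similar argument as in the proof of Theorem~\ref{converse},'' and the surrounding setup (the theta decomposition, the isomorphism $\phi_{k,m}$, and the lemma identifying $\phi_{k,m}(\alpha_k F)$ with $\delta_{k-\frac12}(\phi_{k,m}(F))$) makes clear that the reduction you outline is exactly what is meant.
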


  \section{$L$-series of harmonic weak Maass forms in the Kohnen plus space}
  In this section, we consider the case of half-integral weight modular forms in the Kohnen plus space and prove a converse theorem for this case.
   Let $k$ be a positive even integer. 
By \cite[Theorem 5.4]{EZ}, there is an isomorphism $\psi_k$ between  $S_{k,1}$ and $S^+_{k-\frac12}$, where $S^+_{k-\frac12}$ denotes the space of cusp forms in the plus space of weight $k-\frac12$ on $\Gamma_0(4)$.

Let $f$ be a cusp form in $S^+_{k-\frac12}$ with Fourier expansion 
\[
f(\tau) = \sum_{\substack{n>0\\ n\equiv 0,3 \pmod{4}}} a_f(n)e^{2\pi in\tau}.
\]
Then, the $L$-function of $f$ is defined by
\[
L(f,s) := \sum_{\substack{n>0\\ n\equiv 0,3 \pmod{4}}} \frac{a_f(n)}{n^s}.
\]
For $1\leq j\leq 2$, let $c_j$ be defined by
\begin{equation} \label{pluscoeff}
a_{f,j}(n) :=
\begin{cases}
a_f(n) & \text{if $n\equiv -j^2 \pmod{4}$},\\
0 & \text{otherwise}.
\end{cases}
\end{equation}
Then, $a_f(n) = a_{f,1}(n) + a_{f,2}(n)$ for all $n$.
With this, we consider partial sums of $L(f,s)$ by
\[
L(f,j,s) :=  \sum_{\substack{n>0\\ n\equiv 0,3 \pmod{4}}} \frac{c_j(n)}{n^s}
\]
for $1\leq j\leq 2$.
Suppose that $F$ is a Jacobi cusp form in $S_{k,1}$.
By the theta decomposition, we have a corresponding vector-valued modular form $(F_1(\tau), F_2(\tau))$. 
Then, the isomorphism $\psi_k$ from  $S_{k,1}$  to $S^+_{k-\frac12}$ is given by 
\[
\psi_k(F)(\tau) = \sum_{j=1}^2 F_j(4\tau).
\]
 From this, we see that
\[
L(\psi_k(F),j,s) = \frac{1}{4^s} L(F,j,s). 
\]

Let $H^+_{k-\frac12}$ denote the space of harmonic weak Maass forms in the plus space space of weight $k-\frac12$ on $\Gamma_0(4)$.
Then, by \cite{CC}, we see that $\psi_k$ can be extended to an isomorphism between $\hat{J}_{k,1}$ and $H^+_{k-\frac12}$ as follows. 
Suppose that $F\in \hat{J}_{k,1}$.
Then, $F$ has the theta decomposition
\[
F(\tau,z) = \sum_{j=1}^2 F_j(\tau) \theta_{m,j}(\tau,z).
\]
Then, $\psi_k(F)(\tau) = \sum_{j=1}^2 F_j(4\tau)$.

Suppose that $f\in H^+_{k-\frac12}$ with its Fourier expansion
\begin{eqnarray*}
f(\tau) &=&    \sum_{n\gg -\infty}c^+_{f}(n) e^{2\pi i n\tau}\mbf{e}_j + \sum_{n< 0}c^-_{f}(n)\Gamma\left(\frac32-k,-4\pi nv \right) e^{2\pi i n\tau}.
\end{eqnarray*}
For $1\leq j\leq 2$,  we define $c^{\pm}_{f,j}(n)$ as in (\ref{pluscoeff}
), and 
\[
f_j(\tau) =  \sum_{n\gg -\infty}c^+_{f,j}(n) e^{2\pi i n\tau}\mbf{e}_j + \sum_{n< 0}c^-_{f,j}(n)\Gamma\left(\frac32-k,-4\pi nv \right) e^{2\pi i n\tau}.
\]
For a vector-valued function $F(\tau) := \sum_{j=1}^{2} f_j(\tau/4) \mbf{e}_j$, we define the $L$-series $L_F(\varphi)$ and $L_{\delta_{k-\frac12} F}(\varphi)$ as in (\ref{vvL}) and (\ref{vvLdelta}), respectively.

We now have the following converse theorem for half-integral weight harmonic weak Maass forms in the Kohnen plus space.  

\begin{thm} \label{plusconverse}
Let $(c^+_{f}(n))_{n\geq-n_0}$ and $(c^-_{f}(n))_{n<0}$ be sequences of complex numbers such that 
\[
c^+_f(n), c^-_{f}(n) = O\left(e^{C\sqrt{|n|}}\right)
\]
as $|n|\to\infty$, for some $C>0$. 
For each $\tau\in\mathbb{H}$, set
\[
f(\tau) :=  \sum_{n\gg -n_0}c^+_{f}(n) e^{2\pi i n\tau}\mbf{e}_j + \sum_{n< 0}c^-_{f}(n)\Gamma\left(\frac32-k,-4\pi nv \right) e^{2\pi i n\tau}.
\]
Suppose that for each $\varphi\in S_c(\mathbb{R}_+)$, the functions $L_F(\varphi)$ and $L_{\alpha_k F}(\varphi)$ satisfy 
\[
L_F(\varphi) = i^{k-\frac12} \overline{\rho_2}(S) L_f(\varphi|_{2-k+\frac12, \chi_\eta} S)
\]
and
\[
L_{\alpha_k F}(\varphi) = -i^{k-\frac12} \overline{\rho_2}(S) L_{\alpha_k f}(\varphi|_{2-k+\frac12, \chi_\eta}S).
\]
Then, $f$ is a harmonic weak Maass form in $H^+_{k-\frac12}$.
\end{thm}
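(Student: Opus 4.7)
The strategy is to reduce to the vector-valued harmonic converse theorem of Section~3 (Theorem~\ref{converse}) applied to the auxiliary vector-valued form $F(\tau)=\sum_{j=1}^{2} f_j(\tau/4)\mbf{e}_j$, and then transfer the conclusion back to the plus space through the composition of the inverse theta-decomposition isomorphism $\phi_{k,1}^{-1}\colon H_{k-\frac12,\overline{\chi_\eta},\overline{\rho_1}}\to \hat{J}_{k,1}$ with the Kohnen-type isomorphism $\psi_k\colon \hat{J}_{k,1}\to H^+_{k-\frac12}$.

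The first step is to verify the hypotheses of Theorem~\ref{converse} for $F$ with weight $k-\frac12$, multiplier system $\overline{\chi_\eta}$, and representation $\overline{\rho_1}$. The assumed bounds $c_f^\pm(n)=O(e^{C\sqrt{|n|}})$ transfer directly to the coefficients of the components $F_j$, so $F_j(\tau)$ and $(\delta_{k-\frac12} F)_j(\tau)$ converge absolutely to smooth functions on $\mathbb{H}$. The functional equations assumed for $L_F(\varphi)$ and $L_{\alpha_k F}(\varphi)$ are exactly those required in Theorem~\ref{converse}: the lemma preceding Theorem~\ref{Jacobiconverse} identifies $\alpha_k$ on the Jacobi side with $\delta_{k-\frac12}$ on the vector-valued side via $\phi_{k,1}$, so $L_{\alpha_k F}$ here is to be read as $L_{\delta_{k-\frac12} F}$, which is the second $L$-series appearing in the vector-valued harmonic converse theorem. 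Thus Theorem~\ref{converse} yields $F\in H_{k-\frac12,\overline{\chi_\eta},\overline{\rho_1}}$.

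The second step is to transport this to the plus space. Applying $\phi_{k,1}^{-1}$ produces a harmonic Maass--Jacobi form $\widetilde F\in \hat{J}_{k,1}$ whose theta components are precisely the $F_j$. By construction, the Fourier exponents of $F_j$ occur only at $n/4$ with $n+j^2\equiv 0\pmod{4}$, so the scalar function
\[
\psi_k(\widetilde F)(\tau) \;=\; \sum_{j=1}^{2} F_j(4\tau) \;=\; \sum_{j=1}^{2} f_j(\tau) \;=\; f(\tau)
\]
has Fourier support only in the plus-space residue classes $n\equiv 0,3\pmod{4}$ and automatically lies in $H^+_{k-\frac12}$.

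The main obstacle is bookkeeping across the three descriptions (plus space, Jacobi, vector-valued) under the dilation $\tau\mapsto \tau/4$: one must check that the weight $k-\frac12$, the multiplier $\overline{\chi_\eta}$, the representation $\overline{\rho_1}$, and the $\overline{\rho_1}(S)$ factor in the assumed functional equations line up verbatim with the hypotheses of Theorem~\ref{converse}, and that the identification $L_{\alpha_k F}=L_{\delta_{k-\frac12} F}$ holds at the level of the defining series (\ref{vvLdelta}). Once these normalization checks are in place, the entire converse theorem follows as a direct application of the vector-valued harmonic machinery already established.
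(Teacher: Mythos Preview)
Your approach is correct and matches the paper's intended argument: the paper does not give an explicit proof of Theorem~\ref{plusconverse}, but the entire setup of Section~5 (defining $F(\tau)=\sum_{j=1}^2 f_j(\tau/4)\mbf{e}_j$ and its $L$-series via (\ref{vvL}) and (\ref{vvLdelta})) is arranged precisely so that the hypotheses of the vector-valued harmonic converse theorem (Theorem~\ref{converse} of Section~3) are met, after which the isomorphism $\psi_k\circ\phi_{k,1}^{-1}$ carries $F$ back to $f\in H^+_{k-\frac12}$. Your identification of $L_{\alpha_k F}$ with $L_{\delta_{k-\frac12}F}$ via the lemma preceding Theorem~\ref{Jacobiconverse} is exactly the needed bookkeeping, and the detour through $\hat{J}_{k,1}$ is harmless (one could equally well note that $\psi_k\circ\phi_{k,1}^{-1}$ acts as $F\mapsto\sum_j F_j(4\tau)=\sum_j f_j(\tau)=f$ directly).
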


\section*{Acknowledgement}
The second-named author would like to thank the FAS Dean's Office at the American University of Beirut for their support of his summer research leave.


    \end{document}